\def\figurename{Figure} % (Replace the colon that normally
\renewcommand{\fnum@figure}[1]{\figurename~\thefigure.}
\def\tablename{Table} % (Replace the colon that normally
\renewcommand{\fnum@table}[1]{\tablename~\thetable.}
\newtheorem{theorem}{Theorem}[section]
\newtheorem{lem}[theorem]{Lemma}
\newtheorem{rmk}[theorem]{Remark}
\newtheorem{ass}[theorem]{Assumption}
\numberwithin{equation}{section}
\def\R{\mathbb R}
\def\cal{\mathcal}
\begin{document}
%\vskip 0.4in
\title{{\bfseries Global attractors and their  upper semicontinuity   for a structural damped wave equation with  supercritical nonlinearity on $\mathbb{R}^{N}$}
\thanks{ The work is supported by National Natural Science Foundation of China (No.11671045, No. 11671367).  * Corresponding author:  Pengyan Ding.  E-mail:  chen\_qionglei@iapcm.ac.cn,
 dingpengyan@yeah.net, yzjzzut@tom.com}}
\author{\bfseries Qionglei Chen$^a$, Pengyan Ding$^{*a}$, \ \  Zhijian  Yang$^{b}$\\
$^a$ Institute of Applied Physics and Computational Mathematics,
Beijing 100088, China\\
 $^b$ School of Mathematics and Statistics, Zhengzhou
University, No.100, Science Road, \\
 Zhengzhou  450001,   China }
\date{}
\maketitle \thispagestyle{empty} \setcounter{page}{1}

\begin{abstract}
The paper investigates the existence of global attractors and their upper semicontinuity  for a  structural damped wave equation on $\mathbb{R}^{N}: u_{tt}-\Delta u+(-\Delta)^\alpha u_{t}+u_{t}+u+g(u)=f(x)$, where $\alpha\in (1/2, 1)$  is called  a dissipative index. We propose  a new method based on  the harmonic analysis technique and the  commutator estimate to exploit the dissipative effect of the structural damping   $(-\Delta)^\alpha u_{t}$ and to overcome the essential difficulty:  ``both the unbounded domain $\mathbb{R}^N$ and the supercritical nonlinearity  cause that the Sobolev embedding loses its  compactness"; Meanwhile we show that there exists  a supercritical index  $p_\alpha\equiv\frac{N+4\alpha}{N-4\alpha}$ depending on  $\alpha$  such that   when the growth exponent $p$ of the nonlinearity $g(u)$ is up to the supercritical range: $1\leqslant p<p_\alpha$: (i)  the IVP of the equation is well-posed and its solution is of additionally global smoothness when $t>0$; (ii) the related solution semigroup possesses a global attractor $\mathcal{A}_\alpha$ in natural energy space for each $\alpha\in (1/2, 1)$; (iii) the family of global attractors   $\{\mathcal{A}_\alpha\}_{\alpha\in (1/2, 1) }$ is upper semicontinuous at each point $\alpha_0\in (1/2, 1)$.
\end{abstract}

\vspace{.08in} \noindent \textbf{Keywords}:   Structural damped wave equation; unbounded domain; well-posedness;   global attractor; upper semicontinuity.

\vspace{.08in} \noindent \textbf{2010 Mathematics Subject Classification:} Primary:  35B40, 35B41;
Secondary: 35B33,  35B65,  37L15.

\section{Introduction}

\hspace{5mm}  In this paper, we investigate the well-posedness, the existence of global attractors and their upper semicontinuity to
the following  structural damped wave equation on $\mathbb{R}^{N}$:
\begin{align}
&u_{tt}-\Delta u+(-\Delta)^\alpha u_{t}+u_{t}+u+g(u)=f(x), \ \ (x,t)\in
 \mathbb{R}^{N}\times  \mathbb{R}^+,\label{1.1}\\
&u(x,0)=u_{0}(x),\  u_{t}(x,0)=u_{1}(x), \ \  x\in\mathbb{R}^{N},\label{1.2}
\end{align}
where $\alpha\in (1/2, 1)$ is called a dissipative index.

It is well known that when $\Omega\subset \mathbb{R}^{N}$ is a bounded domain, the fractional damping
$(-\Delta)^\alpha u_t$ (which is especially called a structural damping as $\alpha\in (1/2, 1]$  (cf.   \cite{S-Chen, S-Chen1})) has a regularizing effect to  the solutions of the IBVP of Eq. \eqref{1.1}, i.e., it  makes them be of  additionally  global (when $\alpha\in (0,1)$) or partial (when $\alpha=1$) smoothness
when $t>0$. For example,  Chueshov \cite{Chueshov1,Chueshov} studied more general Kirchhoff wave model with  structural/strong nonlinear damping
\begin{align}\label{1.3}
u_{tt}-\phi(\|\nabla u\|^2)\Delta u+\sigma(\|\nabla u\|^2)(-\Delta)^\alpha u_t+g(u)=f(x),
\end{align}
with either $\alpha\in (1/2, 1)$ (cf. \cite{Chueshov1}) or $\alpha=1$ (cf. \cite{Chueshov})  on a bounded domain $\Omega$ with Dirichlet boundary condition.  When $ \alpha=1$, he found a supercritical exponent $p^{**}\equiv \frac{N+4}{(N-4)^+}$ ($a^+=max\{a,0\}$) and showed that when  the growth exponent  $p$  of the nonlinearity $g(u)$ is up to supercritical range: $1\leqslant p< p^{**}$, the IBVP of Eq. \eqref{1.3} is still well-posed, as well as its solutions possess additionally  partial regularity when $t>0$; Moreover  the related solution semigroup $S(t)$ has a finite-dimensional `partially strong' global attractor in natural phase space $X=(H^{1}_{0}\cap L^{p+1})(\Omega)
\times L^{2}(\Omega)$. Recently, Ding, Yang and Li \cite{Y-D-L1} removed this `partially strong' restriction for the global attractor.

When $\alpha\in (1/2, 1)$ and $\sigma(s)\equiv 1$, Eq. \eqref{1.3} becomes
\begin{align}\label{1.4}
u_{tt}-\phi(\|\nabla u\|^2)\Delta u+(-\Delta)^\alpha
u_t+g(u)=f(x),
\end{align}
Yang, Ding and Li  \cite{Y-D-L}  found a supercritical growth exponent $p_\alpha\equiv\frac{N+4\alpha}{(N-4\alpha)^+}$ depending on the dissipative index $\alpha$ and  showed   that  when  the growth exponent  $p$  of the nonlinearity $g(u)$ is up to supercritical range: $1\leqslant p< p_\alpha$, not only is the
IBVP of Eq. \eqref{1.4}   well-posed, but also its solutions possess    additionally  global regularity  when $t>0$ (rather than partial one as $\alpha=1$ case); Furthermore, the related solution semigroup $S(t)$ possesses a global and an exponential attractor in phase space $X=(H_0^1\cap L^{p+1})(\Omega)\times L^2(\Omega)$. These results improve   those in \cite{Chueshov1} to some extent. For the related researches on this topic on a bounded domain, one can see  \cite{Carvalho2, Y-D-L1, Y-L, Y-L-N} and references therein.

However for unbounded domain $\Omega$, such as $\Omega=\mathbb{R}^N$, the case becomes much more complex. One major difficulty is  the loss of the compactness  of the Sobolev   embedding, which is indispensable for obtaining  the existence of global attractor. Recently, Yang and Ding \cite{Y-D}  studied the well-posedness and longtime dynamics for the  strongly damped  Kirchhoff wave model on $\mathbb{R}^N$:
\begin{equation}
u_{tt}-M(\|\nabla u\|^2)\Delta u -\Delta u_t + u_t +g(x,u)  = f(x), \label{1.6}
\end{equation}
which includes the  strongly damped wave Eq. \eqref{1.1} (taking $\alpha=1$ there) as its special case by taking $M(s)\equiv1$. By using the tail cut-off method (cf. \cite{Wang}),  the authors obtained the existence of   global and exponential attractors  in phase space $\mathcal{H}= H^1(\R^N)\times L^2(\R^N)$ provided that the nonlinearity $g(x,u)$ is of at most critical growth $p^*\equiv\frac{N+2}{(N-2)^+}$. These  results extend the previous  ones on this topic in \cite{Yang}.

Roughly speaking, the tail cut-off method introduced in \cite{Y-D} is that the authors firstly split  $\mathbb{R}^N$ into the union of a ball with radius $R$ and its complement:  $ \mathbb{R}^N=B_R\cup B_R^C$, then establish the tail estimate on $B_R^C$ such that it is sufficiently small when $R$ suitably large; Secondly, by fixing $R$ they both establish the regularizing estimate and use the compactness of the Sobolev embedding on $B_R$ to get the asymptotical compactness of the related solution semigroup.
Unfortunately, the above technique  is not valid for the structural damping case:  $(-\Delta)^\alpha u_t$, with $\alpha\in (1/2, 1)$, because there exists an essential difference between the spectrum of the operator $-\Delta$  on a bounded domain $\Omega$ and that on  $\mathbb{R}^N$, which leads to that one neither uses the limiting process of  the approximating  solution sequence on the bounded balls $B_R$ to get the existence and the regularity of the   solutions of problem \eqref{1.1}-\eqref{1.2}, nor proceeds the tail estimate  as done in \cite{Y-D} because   integral  by parts fails in this case for the appearance of the structural damping.  Therefore, it needs to provide a new method to overcome these difficulties arising from both the structural damping and   unbounded domain $\mathbb{R}^N$.  The desired aim is to obtain the corresponding results as in a bounded domain as in \cite{Y-D-L}.

In the present paper, based on the Littlewood-Paley theory  we put forward  a double truncation method (rather than letting $B_R\rightarrow \mathbb{R}^N$ as $R\rightarrow\infty$ as done before) and use the  localization in frequency of harmonic analysis technique to make full use of   the dissipative effect  of the structural damping $(-\Delta)^\alpha u_{t}$ and  establish the well-psedness of problem \eqref{1.1}-\eqref{1.2} together with
the additionally global regularity of its solutions when $t>0$. Furthermore, we introduce the commutator estimate to conquer the difficulty in tail cut-off estimate arising from  the structural damping and the essential difficulty:  `both the unbounded domain $\mathbb{R}^N$ and  the supercritical nonlinearity  cause that the compactness of Sobolev embedding is no longer true '. What's more, we use the harmonic analysis technique to solve the robustness of the global attractors on the dissipative index.

The main contribution of this paper is that it  realizes the desired aim. In the concrete, there exists  a supercritical index  $p_\alpha\equiv\frac{N+4\alpha}{N-4\alpha}$ depending on  $\alpha$  such that   when the growth exponent $p$ of the nonlinearity $g(u)$ is up to the supercritical range: $1\leqslant p<p_\alpha$:

(i)  the IVP of Eq. \eqref{1.1}  is well-posed and its solution
is  of additionally global  smoothness when $t>0$;  (see Theorem \ref{T2.2})

(ii)  the related solution semigroup  possesses a global attractor $\mathcal{A}_\alpha$  in natural energy space for each $\alpha\in (1/2, 1)$; (see Theorem \ref{T3.7})

(iii) the family of global  attractors  $\{\mathcal{A}_\alpha\}_{\alpha\in (1/2, 1) }$ is upper semicontinuous at each point $\alpha_0\in (1/2, 1)$, i.e.,
for any neighborhood $U$ of $\mathcal{A}_{\alpha_0}, \mathcal{A}_\alpha\subset U$  when $|\alpha-\alpha_0|\ll1$. (see Theorem \ref{T4.2})

We mention  that there have been  several remedies to save the loss of compactness  of the Sobolev embedding  on the unbounded domain.  One of them is working in  weighted Sobolev spaces
(cf. \cite{Abergel, Babin, Karachalios, Mielke, Zelik}
and references therein). For example,  Savostianov \cite{S} studied the  infinite-energy solutions of the semilinear wave equation with fractional damping  in an unbounded domain of $\mathbb{R}^3$
 \begin{equation}
 u_{tt}-\Delta u + \lambda_0u+\gamma(I-\Delta)^\frac{1}{2} u_t + f(u) = g.
 \end{equation}
Under  the critical nonlinearity assumption:
 \[|f'(s)|\leqslant C(1 + |s|^4)  \  \ \hbox{and}\ \ f(s)s\geqslant -M,\]
he  proved the existence,  uniqueness and extra regularity   of the infinite-energy solutions, and established the existence of locally compact attractor of the corresponding solution semigroup.  For the related researches  on the longtime dynamics of a nonlinear evolution equation  in  weighted Sobolev spaces,  one can see \cite{D-Y, M-S, Zelik1} and references therein.

Another approach  developed for damped wave equation is working in the usual Sobolev spaces. For example, by using the property of finite speed of propagation, the Strichartz estimate and a suitable semigroup decomposition, Feireisl \cite{Feireisl1, Feireisl2}  established the  existence  of   global attractor for the  damped semilinear  wave equation on $\mathbb{R}^3$:
 \begin{equation}\label{1.7}
u_{tt}-\Delta u + d(x)u_t+au+ f(x,u) = g(x)
 \end{equation}
provided that $|f_s(x,s)|\leqslant C(1+|s|^q), q<4$.

By combining the  decomposition of the solution semigroup with the  suitable cut-off functional,  Belleri and Pata \cite{Belleri}
proved the existence  of  global attractor for the  strongly damped semilinear wave equation on $\mathbb{R}^3$:
 \begin{eqnarray}
u_{tt}-\Delta u_{t}-\Delta u+g(x,u)+\phi(x)u_{t}=f(x) \label{1.8}
\end{eqnarray}
provided that the nonlinearity $g(x,u)$ is of subcritical growth $3$. Then in critical nonlinearity case,  namely, the growth rate of $g(x,u)$ can  reach to  $5$, Conti, Pata and Squassina \cite{Conti}  obtained the existence of global attractor for Eq. \eqref{1.8} (replacing  $\phi(x)u_{t}$ there by more general $\phi(x, u_{t})$).

Obviously, in all the above mentioned researches, the growth exponent of the nonlinearity reaches at most to critical.  To the best of the authors' knowledge, this paper is the first one to establish the existence of global attractors and   their robustness  on  the dissipative index in supercritical nonlinearity case on $\mathbb{R}^N$.

The paper is arranged  as follows.  In Section \ref{Sec2}, we discuss the  well-posedness of   problem \eqref{1.1}-\eqref{1.2} and the additionally global smoothness of its solutions when $t>0$. In Section \ref{Sec3},  we study the existence of global attractor for each $\alpha\in (1/2, 1)$. In Section \ref{Sec4}, we investigate
upper semicontinuity of the family of global attractors $\{\mathcal{A}_\alpha\}_{\alpha\in (1/2, 1)}$.

\section{\bf Well-posedness }\label{Sec2}

We begin with  the following abbreviations:
\begin{eqnarray*}&&
L^{p}=L^{p}(\mathbb{R}^{N}),\ \ H^{s,p}=H^{s,p}(\mathbb{R}^{N}), \ \ \dot{H}^{s,p}=\dot{H}^{s,p}(\mathbb{R}^{N}),\ \  \int=\int_{\R^N},\ \  \|\cdot\|_{p}=\|\cdot\|_{L^{p}},\ \   \|\cdot\|=\|\cdot\|_{2},
\end{eqnarray*}
with $s\in \mathbb{R}, p>1$, where  $H^{s,p}$ is the Bessel potential space equipped with the norm
\[\|f\|_{H^{s,p}}=\|(I-\Delta)^{\frac{s}{2}}f\|_p
=\|\mathcal{F}^{-1}[(1+|\xi|^2)^{\frac{s}{2}}\mathcal{F}f]\|_p,\]
and where $I$ denotes the identity operator,   $\hat{f}=\mathcal{F}(f)$ and $\check{f}= \mathcal{F}^{-1}(f)$ are  the Fourier transformation and the Fourier inverse transformation of $f$, respectively.   And  $\dot{H}^{s,p}$ is the Riesz potential space  equipped with the semi-norm
\[\|f\|_{\dot{H}^{s,p}}=\|(-\Delta)^{\frac{s}{2}}f\|_p
=\|\mathcal{F}^{-1}[|\xi|^s\mathcal{F}f]\|_p,\]
where the operators $(I-\Delta)^{\frac{s}{2}}$ and $(-\Delta)^{\frac{s}{2}}$ are called Bessel potential and Riesz potential, respectively. The notation $(\cdot,\cdot)$ for $L^{2}$-inner product will also be used for the notation of
duality pairing between the dual spaces. We use the same letter $C$ to denote different positive constants, and use $C(\cdot,\cdot)$ to denote positive constants depending on the quantities appearing in  parenthesis. The sign $H_{1}\hookrightarrow H_{2}$ denotes that the functional space $H_{1}$ continuously embeds into $H_{2}$ and
$ H_{1}\hookrightarrow\hookrightarrow H_{2}$ denotes that $H_{1}$ compactly embeds into $H_{2}$.

Define the phase spaces
\begin{equation}
\mathcal{H}_\alpha=
 H^{1+\alpha}\times H^\alpha, \ \  \mathcal{H}=(H^{1}\cap L^{p+1})\times L^2, \ \  \mathcal{H}_{-\alpha}
   =H^\alpha\times H^{-\alpha},\label{e2.1}
\end{equation}
which are equipped with  usual graph norms, for instance,
\[\|(u, v)\|_{\mathcal{H}}^2=\|u\|_{H^1}^2+d_0\|u\|^2_{p+1}+\|v\|^2, \ \ \forall (u, v)\in \mathcal{H},\]
where
\begin{align*}
d_0=\begin{cases} 0, &\ \ \hbox{if} \ \ 1\leqslant p\leqslant p^*\equiv\frac{N+2}{(N-2)^+},\\
1, &\ \ \hbox{if} \ \ p^*< p< p_\alpha\equiv\frac{N+4\alpha}{(N-4\alpha)^+}.
\end{cases}
\end{align*}
Obviously,
 \[\mathcal{H}_\alpha\hookrightarrow \cal H\hookrightarrow \mathcal{H}_{-\alpha}\]
for each $\alpha\in (1/ 2, 1)$.

Define the smooth radial  function
\[\hat{\varphi}(\xi)=
\begin{cases}
1, & \text{if} \ \ |\xi|\leqslant 1, \\
0, & \text{if} \ \ |\xi|\geqslant 2,
\end{cases}, \ \
\int_{\mathbb{R}^N} \hat{\varphi}(\xi) d\xi=1,\ \ \hat{\varphi}_l(\xi)=\hat{\varphi}(\frac{\xi}{2^l}),\]
where $\hat{\varphi}=\mathcal{F}(\varphi)$. Define the operator $S_l$:
\begin{equation*}
(S_lu)(x)=\mathcal{F}^{-1}(\hat{\varphi}_l\hat{u})(x)
=(\varphi_l\ast u)(x),
\end{equation*}
where  $\varphi_l(x)=2^{l N}\varphi(2^{l}x) (\in L^1)$.

\begin{lem} \cite{Chemin}\label{L2.2}
Let  $\mathrm{B}$ be a ball. Then for any non-negative integer $k$, any couple of real  $(a, b)$, with $b\geqslant a\geqslant 1$, and any function $u\in L^a$, there exists a positive number $C$ such that
\begin{align}\label{2.14'}&
\text{supp}\ \hat{u} \subset \lambda \mathrm{B}\Rightarrow
\sup_{|\alpha|=k}\|\partial^\alpha u\|_{L^b}\leqslant C^{k+1}\lambda^{k+N(\frac{1}a-\frac{1}{b})}\|u\|_{L^a}.
\end{align}
\end{lem}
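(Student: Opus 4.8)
The plan is to establish \eqref{2.14'} --- the classical Bernstein inequality --- in three stages: a scaling reduction to the normalized frequency band $\lambda=1$, a convolution representation of $\partial^\alpha u$ combined with Young's inequality, and finally a careful tracking of the constant so that it grows only geometrically, like $C^{k+1}$, in the order $k$ of differentiation.

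First I would reduce to $\lambda=1$ by dilation. Given $u$ with $\mathrm{supp}\,\hat u\subset\lambda\mathrm{B}$, set $u_\lambda(x)=u(x/\lambda)$; then $\widehat{u_\lambda}(\xi)=\lambda^{N}\hat u(\lambda\xi)$, which is supported in $\mathrm{B}$. Changes of variables give $\|u_\lambda\|_{L^a}=\lambda^{N/a}\|u\|_{L^a}$, and, since $\partial^\alpha u_\lambda(x)=\lambda^{-k}(\partial^\alpha u)(x/\lambda)$ for $|\alpha|=k$, also $\|\partial^\alpha u_\lambda\|_{L^b}=\lambda^{N/b-k}\|\partial^\alpha u\|_{L^b}$. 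Consequently the general estimate is equivalent to the case $\lambda=1$ after multiplying through by $\lambda^{k+N(1/a-1/b)}$, and from here on I assume $\mathrm{supp}\,\hat u\subset\mathrm{B}$.

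Next, I would fix once and for all a function $\phi\in\mathcal{S}(\mathbb{R}^N)$ whose Fourier transform $\hat\phi$ is smooth, compactly supported (say in $2\mathrm{B}$) and identically $1$ on $\mathrm{B}$; such a $\phi$ depends only on $N$ and $\mathrm{B}$. Since then $\hat\phi\,\hat u=\hat u$, we have $u=\phi\ast u$, and hence $\partial^\alpha u=(\partial^\alpha\phi)\ast u$ for every multi-index $\alpha$. Choose $r\in[1,\infty]$ by $1+\tfrac1b=\tfrac1r+\tfrac1a$; this is admissible precisely because $b\geqslant a\geqslant1$ forces $\tfrac1a-\tfrac1b\in[0,1)$, so that $\tfrac1r=1-(\tfrac1a-\tfrac1b)\in(0,1]$. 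Young's convolution inequality then gives
\[
\|\partial^\alpha u\|_{L^b}=\|(\partial^\alpha\phi)\ast u\|_{L^b}\leqslant\|\partial^\alpha\phi\|_{L^r}\,\|u\|_{L^a},
\]
so the whole statement reduces to proving $\sup_{|\alpha|=k}\|\partial^\alpha\phi\|_{L^r}\leqslant C^{k+1}$ with $C=C(N,\mathrm{B})$ independent of $k$ and of $r$.

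The main obstacle --- indeed the only genuinely quantitative point --- is exactly this uniform bound, because one must rule out a spurious factorial-type growth in $k$. Using $\|g\|_{L^r}\leqslant\|g\|_{L^1}^{1/r}\|g\|_{L^\infty}^{1-1/r}\leqslant\max\{\|g\|_{L^1},\|g\|_{L^\infty}\}$, it suffices to bound the $L^1$ and $L^\infty$ norms of $\partial^\alpha\phi$. For $L^\infty$: $\partial^\alpha\phi=\mathcal{F}^{-1}[(i\xi)^\alpha\hat\phi]$ gives $\|\partial^\alpha\phi\|_{L^\infty}\leqslant(2\pi)^{-N}\|\xi^\alpha\hat\phi\|_{L^1}\leqslant(2\pi)^{-N}R_0^{\,k}\|\hat\phi\|_{L^1}$, where $R_0$ bounds $|\xi|$ on $\mathrm{supp}\,\hat\phi$; this is of the form $C_0^{k+1}$. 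For $L^1$: write $\|\partial^\alpha\phi\|_{L^1}\leqslant\|(1+|x|)^{-(N+1)}\|_{L^1}\,\|(1+|x|)^{N+1}\partial^\alpha\phi\|_{L^\infty}$ and use $(1+|x|)^{N+1}\leqslant C_N\sum_{|\beta|\leqslant N+1}|x^\beta|$, which reduces matters to $\|x^\beta\partial^\alpha\phi\|_{L^\infty}\leqslant(2\pi)^{-N}\|\partial_\xi^\beta\big((i\xi)^\alpha\hat\phi\big)\|_{L^1}$ for $|\beta|\leqslant N+1$; expanding by the Leibniz rule into terms $(\partial_\xi^{\beta_1}\xi^\alpha)(\partial_\xi^{\beta_2}\hat\phi)$ and noting $|\partial_\xi^{\beta_1}\xi^\alpha|\leqslant k^{|\beta_1|}|\xi|^{k-|\beta_1|}\leqslant k^{N+1}R_0^{\,k}$ on $\mathrm{supp}\,\hat\phi$ while $\|\partial_\xi^{\beta_2}\hat\phi\|_{L^1}\leqslant C(\phi,N)$, one obtains $\|\partial^\alpha\phi\|_{L^1}\leqslant C(\phi,N)\,k^{N+1}R_0^{\,k}$. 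The extra polynomial factor $k^{N+1}$ is absorbed into the exponential, $k^{N+1}R_0^{\,k}\leqslant C_1^{k+1}$ for a suitable $C_1=C_1(N,\mathrm{B})$ and all $k\geqslant0$, so that with $C=\max\{C_0,C_1\}$ we get $\|\partial^\alpha\phi\|_{L^r}\leqslant C^{k+1}$ uniformly in $r\in[1,\infty]$. Inserting this into Young's inequality and undoing the dilation of the first step finishes the proof.
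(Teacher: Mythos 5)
The paper states this lemma as a quoted result from \cite{Chemin} and gives no proof of its own, so there is nothing internal to compare against; judged on its own merits, your argument is the standard and correct proof of the Bernstein inequality, and it is essentially the one found in the cited reference (dilation to reduce to $\lambda=1$, writing $u=\phi\ast u$ with $\hat\phi\equiv 1$ on $\mathrm{B}$, Young's inequality with $1+\tfrac1b=\tfrac1r+\tfrac1a$, and interpolation between $L^1$ and $L^\infty$ bounds on $\partial^{\alpha}\phi$ to keep the constant geometric in $k$). The only point worth polishing is the quantitative step: the bound $|\xi|^{k-|\beta_1|}\leqslant R_0^{\,k}$ needs the harmless extra factor $\max\{1,R_0^{-(N+1)}\}$ when $R_0<1$, and the case $|\beta_1|>k$ (where $\partial_\xi^{\beta_1}\xi^{\alpha}=0$) should be noted, but both are absorbed into $C(N,\mathrm{B})$ and do not affect the conclusion.
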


\begin{lem} (Properties of the operator $S_l$)\label{L2.4}
\ (i) For any $m\in \mathbb{R}$, if $u\in H^m$, then $S_lu\in H^n, \forall n\geqslant m$, and
\begin{align*}
\|S_lu\|_{H^n}\leqslant C(l, m, n)\|S_lu\|_{H^{m}},\label{2.10.}
\end{align*}
especially,
\begin{equation}
\|S_lu\|_{H^{m}}\leqslant \|u\|_{H^{m}}  \ \ \hbox{and}\ \ \lim_{l\rightarrow\infty}\|S_lu-u\|_{H^m}=0.\label{e2.9.}
\end{equation}

(ii) If $u\in L^q$  with $1 \leqslant q < \infty$, then $S_l u\in L^p, \forall p\geqslant q$, and
\begin{align*}
\|S_lu\|_{p}\leqslant C(l,p,q)\|S_lu\|_{q},
\end{align*}
 especially,
\begin{align*}
 \|S_l u \|_q \leqslant \varphi_0 \| u \|_q  \ \ \hbox{and}\ \ \lim_{l\rightarrow\infty}\|S_l u - u\|_q=0,
\end{align*}
where $\varphi_0 \equiv\|\varphi\|_1$.

(iii)
\[(S_lu,v)=(u,S_lv),\ \ \forall u,v \in L^2.\]
\end{lem}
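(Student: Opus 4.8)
The plan is to handle all three parts by passing to the Fourier side, where $S_l$ is the Fourier multiplier with symbol $\hat\varphi_l(\xi)=\hat\varphi(\xi/2^l)$. Two elementary observations drive everything: (a) $\widehat{S_lu}=\hat\varphi_l\hat u$ is supported in the ball $\{|\xi|\leqslant 2^{l+1}\}$, since $\hat\varphi(\eta)=0$ for $|\eta|\geqslant 2$; and (b) taking $\varphi$ with $0\leqslant\hat\varphi\leqslant 1$ (admissible, $\hat\varphi$ being a smoothed indicator equal to $1$ near the origin) gives $|\hat\varphi_l|\leqslant 1$ on all of $\mathbb{R}^N$. For (i), fix $n\geqslant m$; by Plancherel $\|S_lu\|_{H^n}^2=c\int(1+|\xi|^2)^{n}|\hat\varphi_l\hat u|^2\,d\xi$, and on the support $|\xi|\leqslant 2^{l+1}$ one bounds $(1+|\xi|^2)^{n-m}\leqslant(1+4^{l+1})^{n-m}$ (using $n-m\geqslant 0$), so $\|S_lu\|_{H^n}\leqslant(1+4^{l+1})^{(n-m)/2}\|S_lu\|_{H^m}$, which is the asserted inequality with $C(l,m,n)=(1+4^{l+1})^{(n-m)/2}$. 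The bound $\|S_lu\|_{H^m}\leqslant\|u\|_{H^m}$ is immediate from $|\hat\varphi_l|\leqslant 1$ and Plancherel, and since $\widehat{S_lu-u}=(\hat\varphi_l-1)\hat u$ with $(1+|\xi|^2)^m|(\hat\varphi_l-1)\hat u|^2\leqslant 4(1+|\xi|^2)^m|\hat u|^2\in L^1$ while $\hat\varphi_l(\xi)\to\hat\varphi(0)=1$ pointwise, the dominated convergence theorem gives $\|S_lu-u\|_{H^m}\to 0$.

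For (ii), the first inequality is exactly the Bernstein-type estimate of Lemma \ref{L2.2} with $k=0$, $(a,b)=(q,p)$ (legitimate since $p\geqslant q\geqslant 1$) and $\lambda=2^{l+1}$, applied to $S_lu$; note $S_lu=\varphi_l\ast u\in L^q$ by Young's inequality since $\varphi_l\in L^1$, and $\widehat{S_lu}$ is supported in $2^{l+1}\mathrm{B}$, so the lemma yields $\|S_lu\|_p\leqslant C\,2^{(l+1)N(1/q-1/p)}\|S_lu\|_q$, i.e. the bound with $C(l,p,q)=C\,2^{(l+1)N(1/q-1/p)}$. Next $\|S_lu\|_q=\|\varphi_l\ast u\|_q\leqslant\|\varphi_l\|_1\|u\|_q=\|\varphi\|_1\|u\|_q=\varphi_0\|u\|_q$, again by Young's inequality together with the scaling identity $\|\varphi_l\|_1=\|\varphi\|_1$. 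Finally, $\hat\varphi(0)=1$ forces $\int\varphi\,dx=1$, and $\varphi\in\mathcal{S}$ (as $\hat\varphi\in C_c^\infty$), so $\{\varphi_l\}$ is an approximate identity as $l\to\infty$; the standard approximate-identity theorem in $L^q$, $1\leqslant q<\infty$—which rests on the density of $C_c(\mathbb{R}^N)$ and the continuity of translation in $L^q$—then gives $\|S_lu-u\|_q=\|\varphi_l\ast u-u\|_q\to 0$.

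For (iii), since $\hat\varphi$ is real and radial (hence even), $\varphi=\mathcal{F}^{-1}\hat\varphi$ is real and even, and Fubini (justified because $\varphi_l\in L^1$ and $u,v\in L^2$) gives $(S_lu,v)=\int\!\!\int\varphi_l(x-y)u(y)\,\overline{v(x)}\,dy\,dx=\int u(y)\,\overline{\int\varphi_l(y-x)v(x)\,dx}\,dy=(u,S_lv)$; equivalently this follows from Plancherel and the reality of the multiplier $\hat\varphi_l$. I do not expect a genuine obstacle in any of the three parts—they are standard Littlewood--Paley facts—but three points warrant care and should be recorded up front: that $\varphi$ is taken with $0\leqslant\hat\varphi\leqslant 1$ and that $\widehat{S_lu}$ is supported in $\{|\xi|\leqslant 2^{l+1}\}$, since every estimate hinges on these; that the $L^q$ convergence implicitly uses $\int\varphi\,dx=1$; and that the constants $C(l,m,n)$ and $C(l,p,q)$ depend only on the displayed indices, which is clear since they are explicit powers of $2^{l+1}$.
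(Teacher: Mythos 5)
Your proof is correct and follows essentially the same route as the paper's: Plancherel on the Fourier side together with the support of $\hat\varphi_l$ for (i) and (iii), and Lemma~\ref{L2.2} plus Young's inequality for (ii). The only cosmetic differences are that you get $\|S_lu-u\|_{H^m}\to 0$ by dominated convergence where the paper uses the tail estimate over $\{|\xi|>2^l\}$, and you get $\|S_lu-u\|_q\to 0$ from the standard approximate-identity theorem where the paper reduces it, via density of $C_0^\infty$ in $L^q$ and $H^m\hookrightarrow L^q$, to the $H^m$ convergence already established.
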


\begin{proof}
(i) For any  $n\geqslant m$,
 \begin{align*}&
\|S_lu\|_{H^n}
=\|(1+|\xi|^2)^{\frac{n-m}2}
(1+|\xi|^2)^{\frac{m}2}(\hat{\varphi}_l\hat{u})\|\\
\leqslant& C(l, m, n)\|
(1+|\xi|^2)^{\frac{m}2}(\hat{\varphi}_l\hat{u})\|
=C(l, m, n)\|S_lu\|_{H^{m}}.
\end{align*}
Obviously, $C(l, m, n)=1$ when $n=m$. By the Plancherel theorem,
 \begin{align*}
 \|S_lu-u\|_{H^m}^2&=\int(1+|\xi|^2)^m(\hat{\varphi}_l-1)^2|\hat{u}|^2d\xi\\
 &
 \leqslant C\int_{|\xi|>2^l}(1+|\xi|^2)^m|\hat{u}|^2d\xi\rightarrow 0\ \ as \ \ l\rightarrow \infty.
 \end{align*}

(ii) By formula   \eqref{2.14'} (taking that $B$ is a unit ball, $k=0, \lambda=2^{l+1}, a=q, b=p$ there), we have
\[ \|S_l u \|_p \leqslant C(l,p,q) \| u \|_q.\]
Taking account  of $ \varphi, \varphi_l\in L^1$,  we obtain
\[\| S_l u \|_q = \| \varphi_l *u\|_q\leqslant \| \varphi_l \|_1 \| u \|_q=\| \varphi\|_1 \|u \|_q\leqslant \varphi_0\| u\|_q.\]
By the density of $C^\infty_0(\mathbb{R}^N)$ in $L^q$ we have that for any $\epsilon >0$, there exists a function $g\in C^\infty_0(\mathbb{R}^N)$ satisfying
\[\|u-g \|_q < \frac{\epsilon}{2(\varphi_0+1)}. \]
Since $H^m \hookrightarrow L^q$ for $m$ suitably large and  \eqref{e2.9.}, there must exist a   $L>0$ such that
\[\| S_l g- g\|_q\leqslant C\| S_l g- g\|_{H^m}<  \frac{\epsilon}{2} \ \ as\ \  l> L.\]
 Therefore,
\begin{align*}
\|S_lu - u\|_q
\leqslant&\| S_l u - S_l g \|_q+\| S_l g-g\|_q+\|g-u\|_q \\
\leqslant &(\varphi_0+1)\|u - g\|_q + \| S_l g- g\|_q < \epsilon, \ \ as\ \  l> L.
\end{align*}

(iii) For any $u, v\in L^2$,
\[(S_lu,v)=(\mathcal{F}^{-1}(\hat{\varphi}_l\hat{u}),v)
=(\hat{\varphi}_l\hat{u},\hat{v})=(\hat{u},\hat{\varphi}_l\hat{v})
=(u,S_lv).\]
\end{proof}

Let the phase space  $X_\alpha=H^{2\alpha+1}\times H^{2\alpha}$, with $\alpha\in (1/ 2, 1)$. By Lemma \ref{L2.4},
\begin{align}\label{e2.9*}
 &\|(S_lu_0, S_lu_1)\|_{X_\alpha}\leqslant C_0(l)\|(u_0, u_1)\|_{\cal H},\ \  \ \|(S_lu_0, S_lu_1)\|_{ \mathcal{H}}\leqslant (\varphi_0+1)\|(u_0, u_1)\|_\mathcal{H},\nonumber\\
&   \|S_lf\|\leqslant \|f\|,\ \ \lim_{l\rightarrow\infty}\|(S_lu_0, S_lu_1, S_lf)-(u_0, u_1, f)\|_{ \mathcal{H}\times L^2}=0,
\end{align}
where and in the following $ C_0(l)$ denotes a positive constant  depending  on $l$.

\begin{lem} \cite{Simon}\label{L2.1}
Let $X, B$ and $Y$ be three Banach spaces, $X \hookrightarrow\hookrightarrow B\hookrightarrow Y,$
\begin{eqnarray*}&&
W =\{u\in L^p(0,T;X)|  u_t\in L^1(0,T;Y)\}, \ \hbox{with}\
1\leqslant p<\infty,\\
&& W_1=\{u\in L^\infty(0,T;X)| u_t\in L^r(0,T;Y)\},\ \hbox{with}\
r>1.
\end{eqnarray*}
Then $ W \hookrightarrow\hookrightarrow
L^p(0,T;B), W_1\hookrightarrow\hookrightarrow C([0,T];B)$.
\end{lem}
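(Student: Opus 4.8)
The statement is the classical Aubin--Lions--Simon compactness lemma (cf.\ \cite{Simon}), and the plan is to deduce it from the general characterization of relatively compact subsets of $L^p(0,T;B)$ combined with Ehrling's interpolation inequality. Since $X\hookrightarrow\hookrightarrow B\hookrightarrow Y$, for every $\eta>0$ there is $C_\eta>0$ with $\|v\|_B\leqslant\eta\|v\|_X+C_\eta\|v\|_Y$ for all $v\in X$; this, together with the bounds defining $W$ and $W_1$, is all that will be used. The characterization I would invoke (itself proved by mollifying in time) is: a bounded set $F\subset L^p(0,T;B)$, $1\leqslant p<\infty$, is relatively compact there provided (a) $\{\int_{t_1}^{t_2}u(t)\,dt:u\in F\}$ is relatively compact in $B$ for all $0<t_1<t_2<T$, and (b) $\sup_{u\in F}\|u(\cdot+h)-u(\cdot)\|_{L^p(0,T-h;B)}\to0$ as $h\to0^+$. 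Indeed, by (b) the time-mollifications $\rho_\varepsilon\ast u$ approximate $F$ uniformly in $L^p(0,T;B)$, while for fixed $\varepsilon$ the family $\{\rho_\varepsilon\ast u\}$ is equicontinuous in $t$ and, by (a), takes values in a fixed compact subset of $B$ on each interior subinterval, hence is relatively compact in $C$ there; so $F$ is totally bounded. For $p=\infty$ one argues directly with Arzel\`a--Ascoli in $C([0,T];B)$.

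First I would verify (a) uniformly over bounded subsets of $W$ and $W_1$: by H\"older's inequality $\|\int_{t_1}^{t_2}u\,dt\|_X\leqslant(t_2-t_1)^{1/p'}\|u\|_{L^p(0,T;X)}$ (with $1/p+1/p'=1$, and the obvious modification for $p=\infty$), so the time-averages lie in a bounded, hence (by $X\hookrightarrow\hookrightarrow B$) relatively compact, subset of $B$. For $W_1$ the equicontinuity is then immediate: from $u_t\in L^r(0,T;Y)$, $r>1$, one gets $\|u(t+h)-u(t)\|_Y\leqslant h^{1/r'}\|u_t\|_{L^r(0,T;Y)}$ for all $t$, so a bounded set of $W_1$ is equi-H\"older-continuous into $Y$; it is also equibounded into $Y$ (since $L^\infty(0,T;X)\hookrightarrow L^\infty(0,T;Y)$) and pointwise relatively compact in $B$ (its values are bounded in $X\hookrightarrow\hookrightarrow B$), so Arzel\`a--Ascoli gives relative compactness in $C([0,T];Y)$, and Ehrling's inequality then converts a sequence that is Cauchy in $C([0,T];Y)$ and bounded in $L^\infty(0,T;X)$ into one that is Cauchy in $C([0,T];B)$. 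This proves $W_1\hookrightarrow\hookrightarrow C([0,T];B)$.

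The step I expect to be the main obstacle is the case of $W$, where $u_t$ is only in $L^1(0,T;Y)$: the bound $\|u(\cdot+h)-u(\cdot)\|_{L^1(0,T-h;Y)}\leqslant h\|u_t\|_{L^1(0,T;Y)}$ gives translation-continuity merely in $L^1$, whereas (b) requires it in $L^p$. I would get around this by first upgrading to an $L^\infty$-in-time bound in $Y$: since $X\hookrightarrow Y$ and $u_t\in L^1(0,T;Y)$ we have $u\in W^{1,1}(0,T;Y)\subset C([0,T];Y)$, and choosing $t_0$ with $\|u(t_0)\|_Y\leqslant T^{-1/p}\|u\|_{L^p(0,T;Y)}$ and integrating $u_t$ yields $\|u\|_{L^\infty(0,T;Y)}\leqslant C$, uniformly over bounded subsets of $W$. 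Interpolating the $L^1$-decay of the translates against this $L^\infty$-bound then gives $\|u(\cdot+h)-u(\cdot)\|_{L^p(0,T-h;Y)}\leqslant(Ch)^{1/p}(2C)^{1/p'}\to0$, uniformly. Finally, applying Ehrling's inequality pointwise in $t$ and integrating, $\|u(\cdot+h)-u(\cdot)\|_{L^p(0,T-h;B)}$ is bounded by a constant times $\eta\,\|u\|_{L^p(0,T;X)}+C_\eta\,\|u(\cdot+h)-u(\cdot)\|_{L^p(0,T-h;Y)}$; letting $h\to0$ and then $\eta\to0$ gives (b), which together with (a) yields $W\hookrightarrow\hookrightarrow L^p(0,T;B)$ and completes the plan.
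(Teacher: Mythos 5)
The paper gives no proof of this lemma---it is quoted directly from \cite{Simon}---so there is nothing internal to compare against; your argument is a correct reconstruction of Simon's own proof (his characterization of relatively compact subsets of $L^p(0,T;B)$ via time-averages and translates, combined with Ehrling's inequality, i.e.\ Theorems 1 and 3 and Corollary 4 of that paper). In particular, the two points you flag as delicate---upgrading the $L^1$ translate estimate to $L^p$ through the $W^{1,1}(0,T;Y)\hookrightarrow L^\infty(0,T;Y)$ bound, and obtaining compactness in $C([0,T];Y)$ by Arzel\`a--Ascoli before interpolating up to $C([0,T];B)$---are handled exactly this way in the cited source, so the proposal is sound.
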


\begin{ass} \label{A2.3}
(i)  $g \in C^1(\mathbb{R})$,  with $g(0)=0$. Either
\begin{align*}&
 |g'(s)|\leqslant  C(1+|s|^{p-1}) \ \ \text{and }  g'(s)>-C_0 \ \ \text{for some }\ 1\leqslant p\leqslant p^*,
\end{align*}
or else
\begin{align*}
 C_1|s|^{p-1}-C_0\leqslant g'(s)\leqslant  C(1+|s|^{p-1})\ \ \text{for some }  \  p^*< p< p_\alpha\equiv\frac{N+4\alpha}{(N-4\alpha)^+},
\end{align*}
where $C_1> 0, 0< C_0< 1$ and $a^{+}=\max\{0, a\}$.

(ii) $f\in L^2, \xi_{u}(0)=(u_{0},u_{1})\in \mathcal{H},\|(u_{0},u_{1})\|_{\mathcal{H}}\leqslant R_0$ for some positive constant $R_0$.
\end{ass}

\begin{rmk}\label{R2.4}
It follows from Assumption \ref{A2.3} (i)  that
\begin{align*}&
 |g(s_1)-g(s_2)|\leqslant C(1+|s_1|^{p-1}+|s_2|^{p-1})|s_1-s_2|, \ \ |g(s)|\leqslant C(|s|+|s|^{p}),\\
&g(s)s\geqslant\frac{d_0C_1}{p}|s|^{p+1}-C_0s^2, \ \
G(s)\geqslant\frac{d_0C_1}{p(p+1)}|s|^{p+1}-\frac{C_0}{2}s^2,\\%\label{a2.6}\\
&g(s)s-G(s)+\frac{C_0}{2}s^2\geqslant0
\end{align*}
for   $1\leqslant   p< p_\alpha$  (which means $H^{2\alpha}\hookrightarrow L^{p+1}$), where and in the following $G(s)=\int^s_0g(\tau)d\tau$.
\end{rmk}

\begin{theorem} \label{T2.2}
Let Assumption \ref{A2.3} be valid.  Then problem  \eqref{1.1}-\eqref{1.2}  admits a unique solution $u (=u^\alpha)$ for each $\alpha\in (1/2, 1)$, with
$(u, u_t) \in L^\infty(\mathbb{R}^{+}; \mathcal{H})\cap C_w(\mathbb{R}^{+}; \mathcal{H})$, and the solution possesses the following properties:\\
\noindent(i) (Dissipativity)
\begin{align}
&\|u(t)\|^{2}_{H^{1}}+\|u(t)\|^{2}_{p+1}+\|u_{t}(t)\|^{2}\leqslant
C(R_0, \|f\|)e^{-\kappa t}+ C\|f\|^2,\ \ t>0, \nonumber\\
&\int^{\infty}_{0}\|u_{t}(\tau)\|^{2}_{H^\alpha}d\tau\leqslant C(R_0,\|f\|).\label{2.1}
\end{align}
\noindent(ii) (Additional regularity  as $t> 0$)  For any $a>0, T> a> 0$,
\[(u_{t}, u_{tt})\in L^{\infty}(a,T;H^\alpha\times H^{-\alpha})\cap  L^2(a,T; H^1\times L^2), \]
 \begin{align}
&\|u_{t}(t)\|_{H^\alpha}^{2}+\|u_{tt}(t)\|^2_{H^{-\alpha}}+\int^{t+1}_{t}
(d_0\int|u(\tau)|^{p-1}|u_t(\tau)|^2dx
+\|u_t(\tau)\|_{H^1}^2+\|u_{tt}(\tau)\|^2)d\tau\nonumber\\
&
\leqslant(1+\frac{1}{t^2})C(R_0, \|f\|),\ \ t>0. \label{2.3}
\end{align}
Moreover, $u\in L^{\infty}(a, T; H^{1+\alpha})\cap L^{2}(t, t+1; H^{2})$ and
\begin{align}&
&\|u(t)\|_{H^{1+\alpha}}^{2}+\int^{t+1}_t\|u(\tau)\|^2_{H^{2}}d\tau
\leqslant \left( 1+\frac{1}{(t-\frac{a}{2})^\frac{1}{1-\alpha}}\right)C(\alpha)C(R_0,\|f\|),\ \  t>\frac{a}{2},\label{2.4}
\end{align}
where  $C(\alpha)= (\frac{C\alpha}{1-\alpha})^\frac{\alpha}{1-\alpha}$.\\
\noindent (iii) (Lipshitz stability and quasi-stability in weaker space $\cal{H}_{-\alpha}$)
\begin{align}
\|(z, z_t)(t)\|^{2}_{\cal{H}_{-\alpha}}\leqslant C(T)\|(z, z_t)(0)\|^{2}_{\cal{H}_{-\alpha}}, \ \ t\in [0,T],  \label{2.5}
\end{align}
and
\begin{align}
\|(z, z_t)(t)\|^{2}_{\cal{H}_{-\alpha}}\leqslant Ce^{-\kappa t}\|(z, z_t)(0)\|^{2}_{\cal{H}_{-\alpha}}
+\int^t_0e^{-\kappa (t-\tau)}\|(z, z_t)(\tau)\|^{2}_{L^2\times H^{-2\alpha}}d\tau, \ \ t> 0,  \label{2.6}
\end{align}
where and in the following $\kappa$ denotes a small positive constant, and where $ z=u-v, u, v$ are two  solutions of problem \eqref{1.1}-\eqref{1.2} corresponding to initial data $(u_{0},u_{1})$ and $(v_{0},v_{1})$
 with $\|(u_{0},u_{1})\|_{\mathcal{H}}+\|(v_{0},v_{1})\|_{\mathcal{H}}\leqslant R_0$, respectively.
\end{theorem}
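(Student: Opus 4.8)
\emph{Proof plan.} The strategy is to obtain $u$ as the limit of a frequency--truncated approximating family, derive $l$--uniform a priori estimates, pass to the limit, and finish with a difference estimate. Fix $\alpha\in(1/2,1)$; for $l\in\N$ replace the data $(u_0,u_1,f)$ in \eqref{1.1}--\eqref{1.2} by the smoothed data $(S_lu_0,S_lu_1,S_lf)$ (and, if the supercritical term must be tamed for the construction, add a second frequency cutoff inside the nonlinearity). By Lemma~\ref{L2.4} and the bounds \eqref{e2.9*} the smoothed data lie in $X_\alpha=H^{2\alpha+1}\times H^{2\alpha}$, and since $(-\Delta)^\alpha\partial_t$ endows \eqref{1.1} with a fractional--parabolic smoothing of the high frequencies, the truncated problem possesses a classical solution $u^l$, which is global because on frequency--localized functions all the relevant Sobolev norms are equivalent (Lemma~\ref{L2.2}), so that the energy bound below rules out blow--up. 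Strong time continuity is not expected in the supercritical regime, where the energy identity is only an inequality; hence the statement $(u,u_t)\in L^\infty(\R^+;\mathcal{H})\cap C_w(\R^+;\mathcal{H})$.

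For part (i) I would test \eqref{1.1} with $u^l_t$: the two damping terms give $c\|u^l_t\|^2_{H^\alpha}$ on the dissipation side, and adding the auxiliary multiplier $\varepsilon u^l$ (to recover coercivity of $\|u^l\|^2_{H^1}+d_0\|u^l\|^2_{p+1}$ and a spectral gap, via Remark~\ref{R2.4}) produces $\tfrac{d}{dt}\mathcal{E}+\kappa\mathcal{E}\leqslant C$; Gronwall yields the dissipativity bound \eqref{2.1} together with $\int_0^\infty\|u^l_t\|^2_{H^\alpha}\,d\tau\leqslant C(R_0,\|f\|)$, uniformly in $l$. For part (ii), differentiate the equation in $t$, set $w=u^l_t$, and test the equation for $w$ with a multiplier adapted to $\mathcal{H}_{-\alpha}$ (of the form $(I-\Delta)^{-\alpha}w_t$), absorbing $g'(u^l)w$ through the growth bound of Assumption~\ref{A2.3} and $H^{2\alpha}\hookrightarrow L^{p+1}$; since $\int_0^\infty\|w\|^2_{H^\alpha}\,d\tau<\infty$, multiplying the resulting differential inequality by $t^2$ and integrating produces the weight $1+t^{-2}$ in \eqref{2.3}, and the nonlinear dissipation term $d_0\int|u|^{p-1}|u_t|^2$ falls out of the same computation. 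Finally, \eqref{2.4} is the genuine smoothing estimate: rewriting \eqref{1.1} as $(I-\Delta)u=f-g(u)-u_t-(-\Delta)^\alpha u_t-u_{tt}$ and feeding in \eqref{2.1}--\eqref{2.3}, a bootstrap closed by the fractional--parabolic character of $(-\Delta)^\alpha\partial_t$ -- a nonlinear Gronwall argument, responsible for the singular rate $(t-\tfrac a2)^{-1/(1-\alpha)}$ and for the constant $C(\alpha)$ -- upgrades $u$ to $L^\infty_{\mathrm{loc}}(H^{1+\alpha})\cap L^2_{\mathrm{loc}}(H^2)$.

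The passage to the limit is, in my view, the crux, and the place where the harmonic--analysis method is really needed. The uniform bounds give weak--$*$ limits $u^l\rightharpoonup u$ in the spaces of (i)--(ii); the obstacle is the nonlinearity, because on $\R^N$ the Sobolev embedding is not compact and, in the supercritical range, $g(u)$ is not even controlled at the energy level. I would combine (1) the Aubin--Lions--Simon lemma (Lemma~\ref{L2.1}) applied on balls $B_R$ -- using $u^l_t$ bounded in $L^2_{\mathrm{loc}}(H^\alpha)$ and $u^l$ in $L^\infty(H^1)$ -- to get strong convergence of $u^l$ in $L^2_{\mathrm{loc}}((0,\infty);L^2(B_R))$ and hence a.e.; (2) the uniform $H^{1+\alpha}$ bound \eqref{2.4}, which controls the high--frequency tails of $g(u^l)$ for $t>0$; and (3) the dissipativity \eqref{2.1}, which controls the spatial tails; together these force the convergence of the nonlinear term in the duality needed to pass to the limit in \eqref{1.1}, with a commutator estimate removing the inner truncation if one is used. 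The bounded--domain exhaustion $B_R\to\R^N$ is not available here, which is precisely why the truncation is done in frequency rather than in space.

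For part (iii), let $z=u-v$, which solves $z_{tt}-\Delta z+(-\Delta)^\alpha z_t+z_t+z+g(u)-g(v)=0$ with data in $\mathcal{H}_{-\alpha}$. Testing with a multiplier adapted to $\mathcal{H}_{-\alpha}$ (built from $(-\Delta)^{\alpha-1}z_t$ and lower--order corrections) and estimating the nonlinear contribution via $|g(u)-g(v)|\lesssim(1+|u|^{p-1}+|v|^{p-1})|z|$ (Remark~\ref{R2.4}) together with the regularity of $u,v$ already established and the H\"older--Sobolev inequalities valid for $p<p_\alpha$, a Gronwall argument on $[0,T]$ gives \eqref{2.5}; in particular $u$ is the unique solution in $L^\infty(\R^+;\mathcal{H})\cap C_w(\R^+;\mathcal{H})$. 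The quasi--stability estimate \eqref{2.6} follows from the same computation after splitting the nonlinear term into a part absorbed by the dissipation and a remainder measured in the weaker norm $L^2\times H^{-2\alpha}$, together with the extra multiplier $\varepsilon(-\Delta)^{\alpha-1}z$ producing the factor $e^{-\kappa t}$. I expect the compactness step -- passing to the limit in $g$ on the unbounded domain in the supercritical range -- to be the main obstacle, with a secondary technical difficulty in propagating the precise time--weights $1+t^{-2}$ and $(t-\tfrac a2)^{-1/(1-\alpha)}$ through the nonlinear estimates.
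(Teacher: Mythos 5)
Your plan coincides with the paper's proof in almost every structural respect: the double frequency truncation $S_l(g(S_lu^l))$ with smoothed data, local existence by contraction in $H^{2\alpha+1}\times H^{2\alpha}$ made possible because frequency localization (Lemma~\ref{L2.2}) turns the supercritical nonlinearity into a Lipschitz perturbation, the multiplier $u_t+\epsilon u$ for \eqref{2.1}, time differentiation with the multiplier $(I-\Delta)^{-\alpha}v_t+\epsilon v$ and the $t^2$-weight for \eqref{2.3}, Aubin--Lions--Simon on balls $B_R$ plus a diagonal argument to get $S_lu^l\to u$ a.e.\ and identify the weak-$*$ limit of $g(S_lu^l)$, and the difference estimate with $(I-\Delta)^{-\alpha}z_t+\epsilon z$ for \eqref{2.5}--\eqref{2.6} (your suggested order $(-\Delta)^{\alpha-1}$ is not the one matching the $\mathcal{H}_{-\alpha}$ norm, but that is a calibration detail).

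The one step where your route genuinely diverges, and where it would fail, is \eqref{2.4}. You propose to read $H^2$ regularity off the elliptic rewrite $(I-\Delta)u=f-g(u)-u_t-(-\Delta)^\alpha u_t-u_{tt}$ fed with \eqref{2.1}--\eqref{2.3}. But \eqref{2.3} only gives $u_t\in L^2(t,t+1;H^1)$, so $(-\Delta)^\alpha u_t$ lives in $H^{1-2\alpha}$ with $1-2\alpha<0$, and the elliptic gain stalls at $u\in H^{3-2\alpha}$, which is strictly below $H^2$ for every $\alpha>1/2$; iterating does not help because improving $u$ in space never improves $u_t$. Worse, in the supercritical range $g(u)\in L^2$ requires $u\in L^{2p}$, i.e.\ $p\leqslant N/(N-2-2\alpha)$, which is strictly smaller than $p_\alpha$ for large $N$, so the right-hand side is not even in $L^2$. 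The paper avoids both obstacles by an energy identity with the multiplier $-\Delta u$: the structural damping contributes the \emph{exact} derivative $\frac{1}{2}\frac{d}{dt}\|u\|^2_{\dot H^{1+\alpha}}$ (no estimate of $(-\Delta)^\alpha u_t$ is ever needed), the nonlinearity enters only through $(g(u),-\Delta u)=(g'(u)\nabla u,\nabla u)\geqslant -C_0\|\nabla u\|^2$ via the one-sided bound on $g'$, the term $(u_{tt},-\Delta u)$ is absorbed using the $L^2(t,t+1;L^2)$ bound on $u_{tt}$ from \eqref{2.3}, and the weight $(t-\frac{a}{2})^{1/(1-\alpha)}$ combined with the interpolation $\|u\|^2_{H^{1+\alpha}}\leqslant\|u\|^{2\alpha}_{H^2}\|u\|^{2(1-\alpha)}_{H^1}$ is what produces the constant $C(\alpha)=(\frac{C\alpha}{1-\alpha})^{\frac{\alpha}{1-\alpha}}$. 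You should replace your elliptic bootstrap by this weighted energy argument; the rest of your outline then goes through as in the paper.
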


\begin{proof}  We first consider  the following auxiliary problem on $\mathbb{R}^N$:
\begin{align}\label{2.9''}
\begin{cases} u^l_{tt}-\Delta u^l+(-\Delta)^\alpha u^l_{t}+u^l_{t}+u^l+S_l(g(S_lu^l))=f^l,  \\
u^l(0)=u^l_{0}, \ \ u^l_{t}(0)=u^l_{1},
 \end{cases}
\end{align}
where $(u^l_{0}, u^l_{1})=(S_l u_0, S_l u_1), f^l=S_l f$. Rewrite this problem as the equivalent form on $\mathbb{R}^N$:
\begin{align}
\begin{cases}
\frac{d}{dt}U^l(t)=BU^l(t)+F(u^l),\\
U^l(0)=U^l_0,&
\end{cases} \label{2.9}
\end{align}
where
\begin{align*}
U^l=\begin{pmatrix}
u^l  \\
 u_t^l
\end{pmatrix},\ \
B=\begin{pmatrix}
0 & I \\
\Delta-I & -(-\Delta)^\alpha-I
\end{pmatrix},\ \
F(u^l)=\begin{pmatrix}
0\\ -S_l(g(S_lu^l))+f^l
\end{pmatrix},\ \
U^l_0=\begin{pmatrix}
u^l_{0}\\ u^l_{1}
\end{pmatrix}.
\end{align*}

We show that  problem \eqref{2.9} admits a unique global solution $U^l$ for each $l>0$. By Theorem 2.1 in \cite{G-G-H}, we know that the following linear problem on $\mathbb{R}^N$:
\begin{align*}
\begin{cases}
\frac{d}{dt}U^l(t)=BU^l(t),\\
U^l(0)=U^l_0&
\end{cases}
\end{align*}
possesses a unique solution $U^l\in  C_b (\mathbb{R}^+; X_\alpha)$ for each $U^l_0\in X_\alpha$.   Define the solution operator
 \[\Sigma(t): X_\alpha\rightarrow X_\alpha, \ \ \Sigma(t)U^l_0=U^l(t),\ \ \forall t\in \mathbb{R}^+.\]
Then $\{\Sigma(t)\}_{t\geqslant 0}$ constitutes a semigroup on $X_\alpha$  and a simple calculation shows that
\begin{equation}
\|\Sigma(t)U^l_0\|_{X_\alpha}\leqslant  \|U^l_0\|_{X\alpha},\ \  \forall U^l_0\in  X_\alpha, \ \ i.e., \ \ \|\Sigma(t)\|_{\mathcal{L}(X_\alpha)}\leqslant 1,\ \ \forall t>0. \label{E2.15}
\end{equation}
Let the space
\begin{align*}Z =
C([0, T]; X_\alpha)\ \ \hbox{equipped with the norm}\ \ \|U\|_Z= \max_{0\leqslant  t\leqslant  T}\|U(t)\|_{X_\alpha},
\end{align*}
and
\begin{align*}
X_{R, T} =\{ U\in Z\mid \|U\|_{Z}\leqslant 2R, U(0)=U^l_0\}, \ \ \hbox{with}\ \ R> C_0(l)R_0+\|f\|,
\end{align*}
equipped with the metric
$ d(U, V) = \|U-V\|_Z$. Obviously, $X_{R, T}$  is a complete metric space. For problem \eqref{2.9} we define the operator $\mathbb{T}: X_{R, T} \rightarrow Z$, for any $ U^l\in X_{R, T}$,
\begin{equation}
\mathbb{T}U^l(t)= \Sigma(t) U^l_0+\int^t_0\Sigma(t-\tau)F(u^l(\tau))d\tau, \ \    t\in (0,T].\label{2.15a}
\end{equation}

Now, we show that $\mathbb{T}$ is a contraction  mapping from $X_{R,T}$ to itself for $T$ suitably small.

(i) By  Lemma \ref{L2.4}, formulas  \eqref{E2.15}, \eqref{2.15a}  and the Sobolev embedding $H^{2\alpha+1}\hookrightarrow H^{2\alpha }\hookrightarrow L^{p+1}$  for $1\leqslant p< p_\alpha$,  we have
 \begin{align}
  \|\mathbb{T}U^l(t)\|_{X_\alpha}
 \leqslant & \|U^l_0\|_{X_\alpha}+T\|S_l(g(S_lu^l))-f^l\|_{C([0, T];H^{2\alpha})}\nonumber\\
 \leqslant & \|U^l_0\|_{X_\alpha}+C(l)T\left(\|g(S_lu^l)\|_{C([0, T];L^2)}+\|f^l\|\right)\nonumber\\
 \leqslant & \|U^l_0\|_{X_\alpha}+C(l)T\left(\|S_lu^l\|_{C([0, T];L^2)}+\|S_lu^l\|^p_{C([0, T];L^{2p})}+\|f^l\|\right)\nonumber\\
  \leqslant & \|U^l_0\|_{X_\alpha}+C(l)T\left(\|S_lu^l\|_{C([0, T];L^2)}+\|S_lu^l\|^p_{C([0, T];L^{p+1})}+\|f^l\|\right)\nonumber\\
 \leqslant & \|U^l_0\|_{X_\alpha}+C(l)T\left(\|u^l\|^p_{C([0, T];H^{2\alpha+1})}+\|f^l\|\right)\nonumber\\
 \leqslant & R+C_1(l)T(2R)^p \leqslant 2R,\ \ t\in [0,T] \label{2.15.}
\end{align}
for  $0<T<(2C_1(l)(2R)^{p-1})^{-1}$,  which means that  $\mathbb{T}: X_{R, T} \rightarrow X_{R, T}$.

(ii) Similar to the proof of \eqref{2.15.} and by the H\"{o}lder inequality, we have
\begin{align*}
 \|\mathbb{T}U^l-\mathbb{T}V^l\|_{Z}
 \leqslant & CT
 \|S_l(g(S_lu^l))-S_l(g(S_lv^l))\|_{C([0,T]; H^{2\alpha})}\\
  \leqslant & C(l)T\|g(S_lu^l)-g(S_lv^l)\|_{C([0,T];L^2)}\\
 \leqslant & C(l)T\big(\|S_lu^l-S_lv^l\|_{C([0,T];L^2)}\\
 &+ (\|S_lu^l\|^{p-1}_{C([0,T]; L^{2(p+1)})}
 +\|S_lv^l\|^{p-1}_{C([0,T]; L^{2(p+1)})})
 \|S_lu^l-S_lv^l\|_{C([0,T]; L^{p+1})}\big)\\
   \leqslant & C(l)T\big(\|u^l-v^l\|_{C([0,T];L^2)}\\
   &+ (\|S_lu^l\|^{p-1}_{C([0,T]; L^{p+1})}
   +\|S_lv^l\|^{p-1}_{C([0,T]; L^{p+1})})
   \|u^l-v^l\|_{C([0,T]; L^{p+1})}\big)\\
 \leqslant & C_2(l)T(2R)^{p-1}
\|U^l-V^l\|_{Z}
\leqslant\frac{1}{2}\|U^l-V^l\|_{Z}
 \end{align*}
for $0<T< (2C_2(l)(2R)^{p-1})^{-1}$. Taking
\[T: 0<T< \min\left\{\frac{1}{2C_1(l)(2R)^{p-1}}, \frac{1}{2C_2(l)(2R)^{p-1}}\right\},\]
then  $\mathbb{T}$ is a contraction  mapping from $X_{R,T}$ to itself. By the  Banach fixed point theorem, the mapping $\mathbb{T}$ has a unique fixed point, i.e.,   problem \eqref{2.9} has a   unique solution $U^l\in C([0,T];X_\alpha)$.   Let $[0, T_l)$ be the maximal interval of existence of  the solution $U^l$, i.e.,    $U^l\in C([0, T_l); X_\alpha)$. By the standard arguments we know that  if
\begin{equation}
\sup_{0\leqslant t< T_l}\|U^l(t)\|_{X_\alpha}< \infty,\label{e2.19}
\end{equation}
then $T_l=\infty$.

In order to prove $T_l =\infty$, we first give some a prior estimates for $U^l$. Using the multiplier $u_t^l+\epsilon u^l$ in Eq. \eqref{2.9''}, we get
\begin{equation}
\frac{d}{dt}H(\xi_{u^l}(t))+\Phi(\xi_{u^l}(t))=0,\label{2.12}
\end{equation}
where $\xi_{u^l}=(u^l, u_t^l)$,
\begin{align*}
H(\xi_{u^l})=&\frac{1}{2}\Big(\| u_{t}^l\|^2+\|u^l\|^2_{\dot{H}^1}+\|u^l\|^2+2\int G(S_lu^l)dx
-2(f^l,u^l)\Big)\\
&+\epsilon\Big((u^l,u_{t}^l)+\frac{1}{2}(\|u^l\|^{2}_{\dot{H}^{\alpha}}+\|u^l\|^{2})\Big),\nonumber\\
\Phi(\xi_{u^l})=&
\|u_{t}^l\|^{2}_{\dot{H}^\alpha}+(1-\epsilon)\|u_{t}^l\|^{2}
+\epsilon \Big(\|u^l\|^2_{\dot{H}^1}+\|u^l\|^2
+(g(S_lu^l), S_lu^l)-(f^l, u^l)\Big).
\end{align*}
It follows from Remark \ref{R2.4} that
\begin{align}\label{2.13}&
H(\xi_{u^l})\geqslant
\kappa(\| u^l_{t}\|^2+\|u^l\|^{2}_{H^{1}}
+d_0\|S_lu^l\|^{p+1}_{p+1})-C\|f^l\|^2,\nonumber\\
&\Phi(\xi_{u^l})-\epsilon H(\xi_{u^l})\geqslant 0
\end{align}
for $\epsilon>0$  suitably small,  and by  \eqref{e2.9*},
\begin{align*}
H(\xi_{u^l}(0))\leqslant&  C\Big(\| u_1^l\|^2+\|u_0^l\|^{2}_{H^{1}}+\|S_lu_0^l\|^{p+1}_{p+1}+\|f^l\|^2\Big)
\leqslant C(R_0)+C\|f\|^2.
\end{align*}
Inserting  \eqref{2.13} into \eqref{2.12} gives
\begin{align}\label{2.15}
&\frac{d}{dt}H(\xi_{u^l}(t))+\epsilon H(\xi_{u^l}(t))\leqslant C\|f\|^2,\nonumber\\
&
\|u^l(t)\|_{H^{1}}^{2}+d_0\|S_lu^l(t)\|^{p+1}_{p+1}+\|u_t^l(t)\|^{2}
\leqslant C(R_0,\|f\|)e^{-\kappa t}
+ C\|f\|^2, \ \ t>0.
\end{align}
Letting $\epsilon=0$ in \eqref{2.12}, integrating the resulting expression over $(0, t)$ and using \eqref{2.15},  we have
\begin{eqnarray}
\int^{t}_{0}\|u_{t}^l(\tau)\|^{2}_{H^\alpha}d\tau\leqslant C(R_0,\|f\|),\ \ t>0.\label{2.16}
\end{eqnarray}
The combination of \eqref{2.15} and \eqref{2.16} means that formula  \eqref{2.1} uniformly holds for $U^l(t)$, and the control constants in the right hand side are independent of $l$.

Using the multiplier $(-\Delta)^{2\alpha}u_t^l+\epsilon (-\Delta)^{2\alpha}u^l$ in Eq.  \eqref{2.9''} yields
\begin{equation}
\frac{d}{dt}H_1(\xi_{u^l})+\Phi_1(\xi_{u^l})=(f^l-S_l(g(S_lu^l)), (-\Delta)^{2\alpha}u_t^l+\epsilon (-\Delta)^{2\alpha}u^l),\label{2.12*}
\end{equation}
where
\begin{align}
H_1(\xi_{u^l})=&\frac{1}{2}\Big(\|u_{t}^l\|^2_{\dot{H}^{2\alpha}}+\|u^l\|^2_{\dot{H}^{2\alpha+1}}
+\|u^l\|^2_{\dot{H}^{2\alpha}}\Big)\nonumber\\
&+\epsilon\Big(((-\Delta)^{2\alpha}u^l, u_{t}^l)+\frac{1}{2}(\|u^l\|^{2}_{\dot{H}^{3\alpha}}
+\|u^l\|^{2}_{\dot{H}^{2\alpha}})\Big)\nonumber\\
&\sim\|u_{t}^l\|^2_{\dot{H}^{2\alpha}}
+\|u^l\|^2_{\dot{H}^{2\alpha+1}}
+\|u^l\|^2_{\dot{H}^{2\alpha}},\label{2.8*}\\
\Phi_1(\xi_{u^l})=&
\|u_{t}^l\|^{2}_{\dot{H}^{3\alpha}}
+(1-\epsilon)\|u_{t}^l\|^{2}_{\dot{H}^{2\alpha}}
+\epsilon \Big(\|u^l\|^2_{\dot{H}^{2\alpha+1}}+\|u^l\|^2_{\dot{H}^{2\alpha}}
\Big)\nonumber
\end{align}
for $\epsilon$  suitably small. By Lemmas \ref{L2.2} and  \ref{L2.4},
\begin{align}
\left|\left(S_l(g(S_lu^l)), (-\Delta)^{2\alpha}u_t^l\right)\right|
\leqslant&\|u_t^l\|_{\dot{H}^{3\alpha}}
\|S_l(g(S_lu^l))\|_{\dot{H}^{\alpha}}\nonumber\\
\leqslant&C(l)\|u_t^l\|_{\dot{H}^{3\alpha}}
\|g(S_lu^l)\|\nonumber\\
\leqslant&C(l)\|u_t^l\|_{\dot{H}^{3\alpha}}(\|S_lu^l\|
+\|S_lu^l\|_{2p}^{p})\nonumber\\
\leqslant&\frac{1}{4}\|u_t^l\|_{\dot{H}^{3\alpha}}^2
+C(l)(\|S_lu^l\|^2+\|S_lu^l\|_{p+1}^{2p}).\label{2.13*}
\end{align}
By the similar argument as to the  estimate \eqref{2.13*}, we have
\begin{align}\label{2.15*}&
\left|\left(S_l(g(S_lu^l)), (-\Delta)^{2\alpha}u^l\right)\right|
\leqslant\frac{1}{4}\|u^l\|_{\dot{H}^{2\alpha+1}}^2+
C(l)(\|S_lu^l\|^2+\|S_lu^l\|_{p+1}^{2p}),\nonumber\\
&|(f^l, (-\Delta)^{2\alpha}u_t^l+\epsilon (-\Delta)^{2\alpha}u^l)|\leqslant \frac{1}{4}(\|u_t^l\|_{\dot{H}^{3\alpha}}^2
+\epsilon\|u^l\|_{\dot{H}^{2\alpha+1}}^2)+C(l)\|f^l\|^2.
\end{align}
Inserting  \eqref{2.13*}-\eqref{2.15*} into \eqref{2.12*} and using \eqref{2.15}, \eqref{2.8*} turn out
\begin{eqnarray}
\frac{d}{dt}H_1(\xi_{u^l}(t))+\kappa  H_1(\xi_{u^l}(t))\leqslant C(l, R_0, \|f\|). \label{2.16*}
\end{eqnarray}
Let
\[\Psi(\xi_{u^l})=H(\xi_{u^l})+H_1(\xi_{u^l}).\]
The combination of  \eqref{2.12} and  \eqref{2.16*} gives
\begin{align*}
&\frac{d}{dt}\Psi(\xi_{u^l})+\kappa \Psi(\xi_{u^l})\leqslant C(l, R_0, \|f\|),\nonumber\\
&
\|\xi_{u^l}(t)\|_{X_\alpha}^2=\|u^l(t)\|_{H^{2\alpha+1}}^{2}+\|u_t^l(t)\|^2_{H^{2\alpha}}\leqslant C(l, R_0, \|f\|),\ \ t>0,
\end{align*}
which means that formula \eqref{e2.19} holds and hence $T_l=+\infty$.

For any $\psi\in  L^2\cap L^{p+1}$, by Lemma \ref{L2.4} and \eqref{2.15},
\begin{align}&
|(g(S_lu^l), \psi)|\leqslant C(\|S_lu^l\|\|\psi\|+\|S_lu^l\|^p_{p+1}\|\psi\|_{p+1})
\leqslant C(\|u^l\|+\|S_lu^l\|^p_{p+1})\|\psi\|_{L^2\cap L^{p+1}},\nonumber\\
&\|g(S_lu^l)\|_{L^2+ L^{1+\frac{1}{p}}}\leqslant C(\|u^l\|+\|S_lu^l\|^p_{p+1})\leqslant C(R_0,\|f\|),\label{2.27*}\\
&\|S_l(g(S_lu^l))\|_{H^{-2\alpha}}\leqslant \|g(S_lu^l)\|_{H^{-2\alpha}}\leqslant C\|g(S_lu^l)\|_{L^2+ L^{1+\frac{1}{p}}}\leqslant C(R_0,\|f\|),\nonumber
\end{align}
where we have used the facts: $L^2+ L^{1+\frac{1}{p}} =(L^2\cap L^{p+1})'$ and $ H^{2\alpha}\hookrightarrow (L^2\cap L^{p+1})$ for $1\leqslant p< p_\alpha$. So  by  Eq. \eqref{2.9''} and \eqref{2.15},
\begin{align}
\int^{t+1}_t\|u_{tt}^l(\tau)\|_{H^{-2\alpha}}^2d\tau \leqslant C(R_0,\|f\|),\ \ t\geqslant 0.  \label{2.17}
\end{align}

Owing to $(u^l, u_t^l, u_{tt}^l)\in C(\mathbb{R}^+; H^{2\alpha+1}\times H^{2\alpha}\times L^2)$,  differentiating Eq. \eqref{2.9''} with respect to $t$  and letting   $v^l=u_t^l$, we see that $v^l$ solves
\begin{equation}
v^l_{tt}+(I-\Delta) v^l+(I+(-\Delta)^\alpha) v^l_{t}+S_l(g'(S_lu^l)S_lv^l)=0. \label{2.18}
\end{equation}
Using the multiplier $(I-\Delta)^{-\alpha}v_{t}^l+\epsilon v^l$ in $\eqref{2.18}$ gives
\begin{align}&
\frac{1}{2}\frac{d}{dt}H_2(\xi_{v^l}(t))
+\|(I+(-\Delta)^\alpha)^{1/2}(I-\Delta)^{-\alpha/2}v_{t}^l\|^{2}
+\epsilon\Big(\|v^l\|^2_{H^1}
+\int S_l(g'(S_lu^l)S_lv^l)v^ldx\Big)\nonumber\\
=&\epsilon\|v_{t}^l\|^{2}-\int S_l(g'(S_lu^l)S_lv^l)(I-\Delta)^{-\alpha} v_t^ldx,\label{2.19}
\end{align}
where $\xi_{v^l}(t)=(v^l(t), v^l_t(t))$,
\begin{align}\label{2.20}&
H_2(\xi_{v^l})=\|v_{t}^l\|^2_{H^{-\alpha}}+\|v^l\|^2_{H^{1-\alpha}}
+\epsilon\left(\|v^l\|^2_{\dot{H}^\alpha}+\|v^l\|^2
+2(v^l,v_{t}^l)\right) \sim \|v_{t}^l\|^2_{H^{-\alpha}} +\|v^l\|^2_{H^\alpha},\\
 &\|(I+(-\Delta)^\alpha)^{1/2}(I-\Delta)^{-\alpha/2}v_{t}^l\|\sim \|v_{t}^l\|\nonumber
\end{align}
for  $\epsilon>0$ suitably small, and  where the equivalent constants are independent of $l$. By Lemma \ref{L2.4},
\begin{align}
\int S_l(g'(S_lu^l)S_lv^l)v^ldx
=&\int g'( S_lu^l)(S_lv^l)^2dx\nonumber\\
\geqslant& C_1d_0\int |S_lu^l|^{p-1}(S_lv^l)^2dx-C_0\|S_lv^l\|^2,\label{2.21.}
\end{align}
and
 \begin{align}
&\left|\int S_l(g'(S_lu^l)S_lv^l)(I-\Delta)^{-\alpha}v_t^ldx\right|
=\left|\int g'(S_lu^l)S_lv^lS_l((I-\Delta)^{-\alpha}v_t^l)dx\right|\nonumber\\
\leqslant & C\|S_lv^l\|\|S_l((I-\Delta)^{-\alpha}v_t^l)\|\nonumber\\
&
+\begin{cases} C\|S_lu^l\|^{p-1}_{p+1}
\|S_lv^l\|_{p+1}\|S_l((I-\Delta)^{-\alpha}v_t^l)\|_{p+1},\ \ 1\leqslant  p\leqslant  p^*,\\
\frac{C_1\epsilon}{2}\int|S_lu^l|^{p-1}|S_lv^l|^2dx
+C(\epsilon)\int|S_lu^l|^{p-1}|S_l((I-\Delta)^{-\alpha}v_t^l)|^2dx,\ \  p^*< p< p_\alpha
\end{cases}\nonumber\\
\leqslant &  C\|S_lv^l\|\| v_t^l\|_{H^{-2\alpha}}\nonumber\\
&
+\begin{cases}C(R_0, \|f\|)
\|S_lv^l\|_{H^1}\|S_l((I-\Delta)^{-\alpha}v_t^l)\|_{p+1},\ \ 1\leqslant  p\leqslant  p^*,\\
\frac{C_1\epsilon}{2}\int|S_lu^l|^{p-1}|S_lv^l|^2dx
+C(\epsilon)\|S_lu^l\|^{p-1}_{p+1}
\|S_l((I-\Delta)^{-\alpha}v_t^l)\|_{p+1}^2,\ \  p^*< p< p_\alpha,
\end{cases}\nonumber\\
\leqslant& \epsilon(\frac{1}{2} \|S_lv^l\|^2_{H^1}+\frac{d_0C_1}{2} \int|S_lu^l|^{p-1}|S_lv^l|^2dx+\|v_{t}^l\|^2)
+C(\epsilon, R_0, \|f\|)(\|S_lv^l\|^2+\|v_{t}^l\|_{H^{-2\alpha}}^2),\label{e2.49}
\end{align}
where we have used the Sobolev embedding$ H^{2\alpha-\delta}\hookrightarrow L^{p+1}$ for $\delta: 0<\delta\ll 1$  and the formula
\begin{equation*}
\|S_l((I-\Delta)^{-\alpha}v_t^l)\|^2_{p+1}\leqslant C\|S_l((I-\Delta)^{-\alpha}v_t^l)\|^2_{H^{2\alpha-\delta}}
\leqslant C\|v_{t}^l\|^2_{H^{-\delta}} \leqslant \epsilon \|v_{t}^l\|^2
+C(\epsilon)\|v_{t}^l\|^2_{H^{-2\alpha}}. \label{e2.50}
\end{equation*}
  Inserting  \eqref{2.21.}-\eqref{e2.49}    into \eqref{2.19} and using  \eqref{2.20} we obtain
\begin{align}&
 \frac{d}{dt}H_2(\xi_{v^l})+\kappa\Big(H_2(\xi_{v^l})
 +2\|v_{t}^l\|^{2}+\|v^l\|^2_{H^{1}}
 +d_0\int|S_lu^l|^{p-1}|S_lv^l|^2dx\Big)\nonumber\\
\leqslant& C(R_0, \|f\|)
(\|u_t^l\|^2+\|u_{tt}^l\|_{H^{-2\alpha}}^2).\label{2.22}
\end{align}
When $0\leqslant t\leqslant 1$, multiplying \eqref{2.22}  by $t^2$ yields
\begin{align}&
\frac{d}{dt}\Big(t^2H_2(\xi_{v^l})\Big)+\kappa t^2\left(H_2(\xi_{v^l})+2\|v_{t}^l\|^{2}\right)\nonumber\\
\leqslant& C(R_0, \|f\|) (\|u_t^l\|^2+\|u_{tt}^l\|_{H^{-2\alpha}}^2)+Ct(\|v^l\|_{H^\alpha}^{2}
+\|v_{t}^l\|^2_{H^{-\alpha}})\nonumber\\
\leqslant& \kappa t^2\|v_{t}^l\|^{2}+C(\kappa, R_0, \|f\|)(\|u_t^l\|_{H^\alpha}^{2}
+\|u_{tt}^l\|_{H^{-2\alpha}}^2),\label{2.23}
\end{align}
where we have used the fact
\[Ct\|v_{t}^l\|^2_{H^{-\alpha}}\leqslant Ct\|v_t^l\|\|v_t^l\|_{H^{-2\alpha}}\leqslant
\kappa t^2\|v_{t}^l\|^{2}+C(\kappa)\|v_t^l\|^2_{H^{-2\alpha}}.\]
Applying the Gronwall inequality  to \eqref{2.23} over $(0, t]$, with $ 0<t\leqslant1$, and utilizing \eqref{2.16}, \eqref{2.17}, we have
\begin{eqnarray}&&
t^2H_2(\xi_{v^l}(t))\leqslant  C(R_0, \|f\|)\int^1_0(\|u_t^l(\tau)\|_{H^\alpha}^{2}
+\|u^l_{tt}(\tau)\|_{H^{-2\alpha}}^2)d\tau
\leqslant C(R_0, \|f\|), \nonumber\\
&&\|u_t^l(t)\|_{H^\alpha}^2+\|u^l_{tt}(t)\|_{H^{-\alpha}}^2\leqslant \frac{1}{t^2}C(R_0,\|f\|),
 \ \ 0<t\leqslant 1.\label{2.24}
\end{eqnarray}
When $t\geqslant 1$, applying the Gronwall inequality to \eqref{2.22} over $(1, t)$ and using \eqref{2.24} at $t=1$, we obtain
 \begin{align}
\|u_t^l(t)\|_{H^\alpha}^2+\|u_{tt}^l(t)\|_{H^{-\alpha}}^2
\leqslant& H_2(\xi_{u_t^l}(1))e^{-\kappa(t-1)}+C\int^t_1e^{-\kappa(t-\tau)}
(\|u_t^l\|_{H^\alpha}^{2}+\|u_{tt}^l\|^2_{H^{-2\alpha}})d\tau\nonumber\\
\leqslant& C(R_0, \|f\|).  \label{2.25}
\end{align}
Integrating  \eqref{2.22} over $(t,t+1)$, with $t> 0$,  gives
\begin{align}
\int^{t+1}_t(d_0\int|S_lu^l(\tau)|^{p-1}|S_lu^l_t(\tau)|^2dx
+\|u_{t}^l(\tau)\|^{2}_{H^1}+\|u_{tt}^l(\tau)\|^{2})d\tau\leqslant(1+\frac{1}{t^2})C(R_0, \|f\|).\label{2.26}
\end{align}
The combination of \eqref{2.24}-\eqref{2.26} means that  estimate \eqref{2.3} uniformly holds for $U^l$.

Using the multiplier $-\Delta u^l$ in Eq. \eqref{2.9''} we get
\begin{align*}&
\frac{1}{2}\frac{d}{dt}H_3(u^l)+\|u^l\|^2_{\dot{H}^2}
+\|u^l\|^2_{\dot{H}^1} +(S_l(g(S_lu^l)),  -\Delta u^l)
 =(f^l-u^l_{tt}, -\Delta u^l)+(u^l, u_t^l),\label{2.27}
\end{align*}
where
\[H_3(u^l)=\| u^l\|^2_{\dot{H}^{1+\alpha}}+\| u^l\|^2_{\dot{H}^1}+\|u^l\|^2\sim \|u^l\|^2_{H^{1+\alpha}}.\]
Since $g'(s)> -C_0$,
\begin{align*}
 (S_l(g(S_lu^l)), -\Delta u^l)=(g'(S_lu^l)\nabla S_lu^l, \nabla S_lu^l)\geqslant& -C_0\|u^l\|^2_{H^1},
\end{align*}
we get
\begin{align}&
\frac{d}{dt}H_3(u^l)
+\|u^l\|^2_{\dot{H}^2}+\|u^l\|^2_{\dot{H}^1}+\|u^l\|^2\leqslant C
\|u^l_{tt}\|^2+C(R_0, \|f\|).\label{2.28}
\end{align}
Multiplying \eqref{2.28} by $(t-\frac{a}{2})^\frac{1}{1-\alpha}$ with $\frac{a}{2}\leqslant t\leqslant 1+\frac{a}{2}, 0< a\ll 1$, we get
\begin{align*}
&\frac{d}{dt}\left((t-\frac{a}{2})^\frac{1}{1-\alpha}H_3(u^l)\right)
+(t-\frac{a}{2})^\frac{1}{1-\alpha}\|u^l\|^2_{H^2}
\leqslant C\|u_{tt}^l\|^2
+\frac{C(t-\frac{a}{2})^{\frac{\alpha}{1-\alpha}}}{1-\alpha}
\|u^l\|^2_{H^{1+\alpha}}+C(R_0, \|f\|).
\end{align*}
By the interpolation theorem,
\begin{align*}
\frac{C}{1-\alpha}(t-\frac{a}{2})^{\frac{\alpha}{1-\alpha}}\|u^l\|^2_{H^{1+\alpha}}\leqslant&
\frac{C}{1-\alpha} (t-\frac{a}{2})^{\frac{\alpha}{1-\alpha}}\|u^l\|_{H^2}^{2\alpha}
\|u^l\|_{H^1}^{2(1-\alpha)}
\\\leqslant& \frac{(t-\frac{a}{2})^\frac{1}{1-\alpha}}{2}\|u^l\|^2_{H^2}
+C(\alpha)\|u^l\|^2_{H^{1}},
\end{align*}
where
$C(\alpha)= (\frac{C\alpha}{1-\alpha})^\frac{\alpha}{1-\alpha}$, so
\begin{align}
&\frac{d}{dt}\left((t-\frac{a}{2})^\frac{1}{1-\alpha}H_3(u^l)\right)
+\kappa(t-\frac{a}{2})^\frac{1}{1-\alpha}H_3(u^l)
\leqslant C\|u^l_{tt}\|^2
+C(\alpha)\|u^l\|^2_{H^{1}}+C(R_0, \|f\|),\nonumber\\
&\|u^l(t)\|_{H^{1+\alpha}}^2\leqslant C(\alpha)C(R_0, \|f\|)(t-\frac{a}{2})^{-\frac{1}{1-\alpha}},
   \ \ a/2<t\leqslant 1+a/2. \label{2.29}
\end{align}
Applying the Gronwall lemma to \eqref{2.28} over $(1+a/2,t)$ and making use of  \eqref{2.26} and  \eqref{2.29} at $t=1+a/2$, we get
\begin{align}
 \|u^l(t)\|_{H^{1+\alpha}}^{2}
\leqslant & C\|u^l(1+a/2)\|_{H^{1+\alpha}}^{2}+C\int^t_{1+a/2}e^{-\kappa(t-\tau)}
 \left(\|u^l_{tt}(\tau)\|^2 +C(R_0, \|f\|)\right)d\tau\nonumber\\
\leqslant & C(\alpha)C(R_0, \|f\|),\ \ t>1+a/2. \label{2.30}
\end{align}
Integrating  \eqref{2.28} over $(t,t+1)$, with $t> a/2$,  and exploiting \eqref{2.29}-\eqref{2.30}, we obtain
\begin{align}
 \int^{t+1}_t\|u^l(\tau)\|_{H^{2}}^{2}d\tau
\leqslant C(\alpha)C(R_0, \|f\|)\Big(1+(t-\frac{a}{2})^{-\frac{1}{1-\alpha}}\Big). \label{2.30.}
\end{align}
The combination  \eqref{2.29}-\eqref{2.30.} implies that estimate \eqref{2.4} uniformly holds  for $U^l(t)$.

 By estimates  \eqref{2.15}-\eqref{2.16} and \eqref{2.27*},  we have   (subsequence if necessary)
\begin{align}&
u^l\rightarrow u\ \ weakly^* \ \ in \ \ L^{\infty}(0,T; H^{1}), \nonumber\\
&g(S_lu^l)\rightarrow \vartheta \ \ weakly^* \ \ in \ \ L^{\infty}(0,T; L^2+ L^{\frac{p+1}{p}}),\label{2.32.}\\
&u^l_{t}\rightarrow u_{t}\ \ weakly^* \ \ in \ \ L^{\infty}(0,T;L^2)\cap L^{2}(0,T;H^{\alpha}).\nonumber
\end{align}
If $\vartheta=g(u)$,  then $u^l$ satisfies that for any  $\phi\in H^1\cap L^{p+1}$,
\begin{align*}
&(u^l_t, \phi)+\int^t_0\left[(\nabla u^l, \nabla \phi)+((-\Delta)^\frac{\alpha}{2} u^l_t,
(-\Delta)^\frac{\alpha}{2} \phi)
+(u^l_t+u^l, \phi)+(S_l(g(S_lu^l))-f^l, \phi)\right]d\tau
=(u^l_1, \phi).
\end{align*}
Letting $l\rightarrow \infty$, we get
\[(u_t, \phi)+\int^t_0\left[(\nabla u, \nabla \phi)+((-\Delta)^\frac{\alpha}{2} u_t, (-\Delta)^\frac{\alpha}{2} \phi)
+(u_t+u, \phi)+(g(u)-f, \phi)\right]d\tau=(u_1, \phi),\]
where we have used the   fact:
\begin{align*}
\lim_{ l\rightarrow \infty}\int^t_0(S_l(g(S_lu^l))-g(u), \phi)d\tau
=\lim_{ l\rightarrow \infty}\int^t_0\Big[(g(S_lu^l), S_l\phi-\phi)+(g(S_lu^l)-g(u),\phi)\Big]d\tau
= 0.
\end{align*}
Therefore, $u$ is the weak solution of problem \eqref{1.1}-\eqref{1.2}. By the lower semi-continuity of weak$^*$ limit, estimates \eqref{2.1}-\eqref{2.4} hold for $u$.

In order to show  $\vartheta=g(u)$, it is enough  to prove
\[S_lu^l\rightarrow u\ \  \text{a.e. in } [0, T]\times \mathbb{R}^N.\]
Let $B_R$ be a ball in $\mathbb{R}^N$ with center   zero and radius $R\in \mathbb{N}^+$.  Estimates \eqref{2.15} and \eqref{2.16} show  that the sequence $\{(S_lu^l, S_lu^l_t)\}$ is   uniformly bounded in the space
   \[L^\infty(0, T; H^1(B_R)\cap L^{p+1}(B_R))\times \left(L^\infty(0, T; L^2(B_R))\cap L^2(0, T; H^\alpha(B_R))\right).\]
By Lemma \ref{L2.1}, (subsequence if necessary)
\[S_lu^l \rightarrow u \ \ \text{in } C([0,T]; L^2(B_R)) \ \ \text{and a.e. in } [0, T]\times B_R.\]
By the standard diagonal argument, we can extract a subsequence (still denoted by itself) such that
\[ S_lu^l \rightarrow u \ \  \text{a.e. in } [0, T]\times \mathbb{R}^N,\]
which combining with  \eqref{2.32.}  implies that $\vartheta=g(u)$.

Let $u, v$ be  two solutions of problem \eqref{1.1}-\eqref{1.2} corresponding to initial data $(u_{0},u_{1})$
and $(v_{0},v_{1})$ with $\|(u_{0},u_{1})\|_{\mathcal{H}}+\|(v_{0},v_{1})\|_{\mathcal{H}}\leqslant R_0$,  respectively. Then $z=u-v$ solves
\begin{align}
&z_{tt}+(-\Delta)^\alpha z_{t}-\Delta z+z_{t}+z+g(u)-g(v)=0,\label{2.31}\\
&u(0)=u_{0}-v_0,\  u_{t}(0)=u_{1}-v_1.\nonumber
\end{align}
Using the multiplier  $(I-\Delta)^{-\alpha}z_t+\epsilon z$ in Eq. \eqref{2.31} yields
\begin{align}
&\frac{1}{2}\frac{d}{dt}H_2(\xi_{z})
+\|(I+(-\Delta)^\alpha)^{1/2}(I-\Delta)^{-\alpha/2}z_{t}\|^{2}
+\epsilon\left(\|z\|^{2}_{H^1}+(g(u)-g(v), z)\right)\nonumber\\
=&\epsilon\|z_{t}\|^{2}-(g(u)-g(v),(I-\Delta)^{-\alpha}z_t),\label{2.33}
\end{align}
where $\xi_{z}(t)=(z(t), z_t(t))$ and
\begin{align*}&
H_2(\xi_{z}(t))=\|z_{t}\|^{2}_{H^{-\alpha}}+\|z\|^2_{H^{1-\alpha}}
+\epsilon
\left(\|z\|^{2}_{\dot{H}^\alpha}+\|z\|^{2}+2(z,z_{t})\right)\sim \|z\|_{H^\alpha}^{2}+\|z_{t}\|^{2}_{H^{-\alpha}}
\end{align*}
for $\epsilon>0$ suitably small.  When $1\leqslant p< p_\alpha$,  similar to the proofs of \eqref{2.21.}-\eqref{e2.49}  we have
 \begin{align}&
(g(u)-g(v), z)\geqslant  d_0C_2\int(|u|^{p-1}+|v|^{p-1})|z|^2dx-C\|z\|^2,\label{2.35}\\
&\left|\left(g(u)-g(v), (I-\Delta)^{-\alpha}z_t\right)\right|\nonumber\\
\leqslant&  \epsilon\left(\frac{d_0C_2}{2}\int(|u|^{p-1}+|v|^{p-1})|z|^2dx
+\frac{1}{2}\|z\|^2_{H^1}+\|z_{t}\|^{2}\right)
+C(\epsilon)(\|z\|^2+\|z_{t}\|_{H^{-2\alpha}}^{2})\label{2.36}
\end{align}
for some $C_2> 0$.  Inserting \eqref{2.35}-\eqref{2.36} into \eqref{2.33}  arrives at
\begin{align}&
\frac{d}{dt}H_2(\xi_{z})+\kappa(\|z\|^2_{H^1}+\|z_{t}\|^{2})\leqslant C(\|z\|^2+\|z_{t}\|_{H^{-2\alpha}}^{2}).
\label{2.32}
\end{align}
Using the Sobolev embedding  $H^1\times L^2\hookrightarrow H^{\alpha}\times H^{-\alpha}$ and applying the Gronwall lemma to \eqref{2.32}, one obtains \eqref{2.6}. Then one   directly obtains \eqref{2.5}  for $H^{\alpha}\times H^{-\alpha}\hookrightarrow L^2\times H^{-2\alpha}$.
\end{proof}

\begin{rmk}\label{R2.3}
(i)
The control constants in Theorem  \ref{T2.2} are independent   of $\alpha$ except $C(\alpha)$. A simple calculation shows that $\displaystyle\lim_{\alpha\rightarrow 1} C(\alpha)=\infty$,
which means that estimate \eqref{2.4} does not hold for  $\alpha=1$.

(ii) Although by carefully choosing $a$ and $t$  in  \eqref{2.4},
for example
$t\geqslant \frac{a}{2}+(\frac{C\alpha}{1-\alpha})^\alpha$, we can  balance the blowup of the constant $C(\alpha)$ and obtain
\[\|u^\alpha(t)\|_{H^{1+\alpha}}\leqslant C(R_0, \|g\|), \ \  t\geqslant \frac{a}{2}+(\frac{C\alpha}{1-\alpha})^\alpha.\]
But the additional regularity for $u^\alpha$ still fails when $\alpha\rightarrow 1^-$ for $\frac{a}{2}+(\frac{C\alpha}{1-\alpha})^\alpha\rightarrow +\infty$.
 \end{rmk}

When $1\leqslant p< p_\alpha$, based on Theorem \ref{T2.2}, we   define the solution operator
$S^\alpha(t): \mathcal{H}\rightarrow \mathcal{H},$
 \[S^\alpha(t)(u_{0},u_{1})=\xi_{u^\alpha}(t)=(u^\alpha(t),u^\alpha_{t}(t)),\ (u_{0},u_{1})\in \mathcal{H},\]
where $u^\alpha$ is the solution of  problem \eqref{1.1}-\eqref{1.2}, and the family of solution operators $\{S^\alpha(t)\}_{t\geqslant 0}$ constitutes a  semigroup on $\mathcal{H}$ for each  $\alpha\in (1/2, 1)$,  which is Lipschitz continuous in weaker space $\mathcal{H}_{-\alpha}$. By the interpolation and standard argument as in \cite{Y-D-L}, one easily knows that $\{S^\alpha(t)\}_{t\geqslant 0}$ is H\"{o}lder continuous in phase space $\mathcal{H}$ for each  $\alpha\in (1/2, 1)$.

\section{Global attractor}\label{Sec3}

In this section, we study the existence of global attractor of the dynamical system $(S^\alpha(t), \mathcal{H})$ for each $\alpha\in (1/2, 1)$.  For   brevity, we denote $S^\alpha(t)$ by $S(t)$ and $(u^\alpha, u^\alpha_t)$ by $(u,u_t)$, respectively.

It follows  from  \eqref{2.1} that the dynamical system $(S(t), \mathcal{H})$ possesses a bounded absorbing set
  \[  B_0=\left\{(u,v)\in \mathcal{H}\mid\|u\|^{2}_{H^{1}}+\|u\|^2_{p+1}+\|v\|^{2}\leqslant R_1^2\right\}\]
for $R_1$ suitably large, so there exists a positive constant $t_0$ such that $S(t)B_0\subset B_0$  for $t\geqslant t_0$. Let
\begin{align*}
   \mathcal{B}=\Big[\bigcup_{t\geqslant t_0+1}S(t)B_0\Big]_{\cal H_{-\alpha}},
\end{align*}
where $[ \ ]_{\cal H_{-\alpha}}$ denotes the closure in  $\cal H_{-\alpha}$. Obviously, $\mathcal{B}$ is a forward invariant absorbing set and bounded in $\cal H_{\alpha}$ (see \eqref{e2.1}, \eqref{2.3} and \eqref{2.4}). Then the solution $u$ corresponding to the initial data $(u_0, u_1)\in \mathcal{B}$ satisfies
\begin{align*}&
\|(u,u_t)(t)\|_{\cal H_\alpha}^2+\int^\infty_0\|u_{t}(\tau)\|_{H^{\alpha}}^{2}d\tau
+\int^{t+1}_t(d_0\int|u(\tau)|^{p-1}|u_t(\tau)|^2dx
+\|u_t(\tau)\|^2_{H^1}+\|u_{tt}(\tau)\|^{2})d\tau\\
&\leqslant C(R_0, \|f\|), \ \ \forall t> 0,\\
&
\left|\int^t_0(g(u), u_t)d\tau\right|
\leqslant C\int^t_0\left(\|u(\tau)\|\|u_t(\tau)\|+\|u(\tau)\|^{p+1}_{p+1}
+\int|u(\tau)|^{p-1}|u_t(\tau)|^2dx\right)d\tau\leqslant C(t), \ \forall t> 0,
\end{align*}
i.e.,  $g(u)u_t\in L^1([0, t]\times \R^N)$. So by the technique  used in \cite{St}, we can  use the multiplier $u_t$ in Eq. \eqref{1.1} and   the energy equality
\begin{equation*}
E(\xi_u(t))+\int^t_s(\|u_t(\tau)\|^2_{\dot{H}^\alpha}
+\|u_t(\tau)\|^2)d\tau=E(\xi_u(s)), \ \ \forall t> s\geqslant 0\label{3.1}
\end{equation*}
holds, where $\xi_u=(u,u_t)$,
\[E(u, v)=\frac{1}{2}(\|v\|^2+\|u\|_{\dot{H}^1}^2+\|u\|^2)
+\int G(u)dx-(f, u).\]

We construct the function
\begin{align*}
 K_{0}(s)=
 \begin{cases}
0,&\ \ 0\leqslant s\leqslant 1,\\
s-1,&\ \ 1<s\leqslant 2,\\
1,&\ \ s>2,
\end{cases}
 \end{align*}
and let
\begin{align*}
 K_{\delta}(s)=(\rho_{\delta}*K_{0})(s)
 =\int_{\mathbb{R}}\rho_{\delta}(s-y)K_{0}(y)dy,
\end{align*}
where $\rho_{\delta}(s)$ is the standard mollifier on $\mathbb{R}$ with $supp\rho_{\delta}\subset[-\delta,\delta]$. Obviously,
\begin{align*}
K_{\delta}\in C^{\infty}(\mathbb{R}),\ \  0\leqslant K_{\delta}(s)\leqslant 1, \ \ K_{\delta}(s)=0\
\ as \ \ 0\leqslant s<1; \ K_{\delta}(s)=1\ \  as\ \ s>2,
\end{align*}
with $0<\delta\ll1$. Let $\psi(x)=K_{\delta}(\frac{|x|}{R})$. A simple calculation shows that
\begin{align}
\psi(x)=0\  \ \text{as} \ |x|<R, \ \ 0\leqslant \psi(x)\leqslant 1\ \  \text{and} \ \ |\nabla \psi(x)|\leqslant
\begin{cases}
 CR^{-1}, & R\leqslant |x|\leqslant 2R, \\
0, &\text{others},
\end{cases}  \label{3.2}
 \end{align}
where the constant $C$ is independent of $R$. Hence, we have
\begin{align*}
\|\nabla \psi\|_p\leqslant CR^{-1+\frac{N}{p}},\ \ p\geqslant 1.
\end{align*}
In light of $\psi\in L^\infty$, by the Sobolev embedding theorem we have
\begin{align} &
\|\psi\Lambda^\alpha\varphi\|_\frac{2N}{\alpha}\leqslant C\|\psi\|_{\dot{H}^{\alpha, \frac{2N}{\alpha}}}\leqslant C \|\psi\|_{\dot{H}^{1, \frac{2N}{2-\alpha}}}\leqslant CR^{-\frac{\alpha}{2}},\label{3.4a}
\end{align}
where $\Lambda=(-\Delta)^\frac{1}2$. Using the similar procedure as to the estimate \eqref{3.4a}, we have
\begin{eqnarray}\label{3.4}
\|\psi\|_{\dot{H}^{s,q}}\leqslant C\|\psi\|_{\dot{H}^{1,\frac{Nq}{(1-s)q+N}}}
\leqslant CR^{-(s-\frac{N}{q})}, \ \  \forall s\leqslant 1, q\geqslant 1.
\end{eqnarray}

\begin{lem}\label{L3.5.}\cite{Kenig}
Assume that  $\Lambda=(-\Delta)^\frac{1}2,  0< s< 1, s_1, s_2\in [0,s],s=s_1+s_2$ and $p, p_1, p_2\in (1,+\infty)$ satisfying
\[\frac{1}{p}=\frac{1}{p_1}+\frac{1}{p_2},\]
then
\begin{align*}
\|\Lambda^s(fg)-f\Lambda^sg-g\Lambda^sf\|_{p}\leqslant C\|\Lambda^{s_1}f \|_{ p_1}\|\Lambda^{s_2}g\|_{ p_2 }.
\end{align*}
Moreover, the inequality holds for $ s_1 = 0, p_1 = \infty, i.e., $
\[\|\Lambda^s(fg)-f\Lambda^sg-g\Lambda^sf\|_{p}\leqslant C\| f \|_{\infty}\|\Lambda^{s}g\|_{ p }.\]
\end{lem}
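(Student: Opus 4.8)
This is the Kenig--Ponce--Vega fractional Leibniz (commutator) estimate, so the plan is to reconstruct its proof: first reduce the trilinear expression to a single bilinear singular integral, then split that kernel symmetrically in the main range and run a Littlewood--Paley paraproduct argument at the endpoints. Since $0<s<1$, the operator $\Lambda^s$ (recall $\Lambda=(-\Delta)^{1/2}$) has the pointwise representation $\Lambda^s h(x)=c\,\mathrm{p.v.}\!\int_{\R^N}\frac{h(x)-h(y)}{|x-y|^{N+s}}\,dy$ with $c=c_{N,s}$. Inserting $h=fg$ and subtracting $f\Lambda^s g+g\Lambda^s f$, the numerator $f(x)g(x)-f(y)g(y)-f(x)\bigl(g(x)-g(y)\bigr)-g(x)\bigl(f(x)-f(y)\bigr)$ collapses algebraically to $-\bigl(f(x)-f(y)\bigr)\bigl(g(x)-g(y)\bigr)$, so that for Schwartz $f,g$
\[
\Lambda^s(fg)(x)-f(x)\Lambda^s g(x)-g(x)\Lambda^s f(x)=-c\int_{\R^N}\frac{\bigl(f(x)-f(y)\bigr)\bigl(g(x)-g(y)\bigr)}{|x-y|^{N+s}}\,dy ,
\]
the integral being absolutely convergent; the general $f,g$ are reached by density once the estimate is in hand.

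For the main range $0<s_1,s_2$ with $s_1+s_2=s$, I would factor $|x-y|^{-(N+s)}=|x-y|^{-(\frac N2+s_1)}\,|x-y|^{-(\frac N2+s_2)}$ and apply Cauchy--Schwarz in $y$, obtaining the pointwise bound
\[
\bigl|\Lambda^s(fg)(x)-f\Lambda^s g(x)-g\Lambda^s f(x)\bigr|\leqslant c\,\mathcal S_{s_1}f(x)\,\mathcal S_{s_2}g(x),\qquad \mathcal S_\sigma h(x):=\Bigl(\int_{\R^N}\frac{|h(x)-h(y)|^2}{|x-y|^{N+2\sigma}}\,dy\Bigr)^{1/2}.
\]
H\"older's inequality with $\tfrac1p=\tfrac1{p_1}+\tfrac1{p_2}$ then gives $\|\cdot\|_p\leqslant c\,\|\mathcal S_{s_1}f\|_{p_1}\|\mathcal S_{s_2}g\|_{p_2}$, and the classical square-function characterization of the Bessel-potential scale (Stein, Strichartz) --- namely $\|\mathcal S_\sigma h\|_q\leqslant C\|\Lambda^\sigma h\|_q$ for $0<\sigma<1$, $1<q<\infty$ --- finishes this case.

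The cases where one of $s_1,s_2$ vanishes (including the stated $L^\infty$ endpoint $s_1=0$, $p_1=\infty$) cannot be treated this way, since $\mathcal S_0$ diverges, and here I would Littlewood--Paley decompose $f=\sum_j\Delta_j f$ and $g$, and use the Bony splitting of the three products. In the low--high regime the relevant portion of $\Lambda^s(fg)-f\Lambda^s g$ is the commutator sum $\sum_j[\Lambda^s,S_{j-1}f]\Delta_j g$; because $\widehat{S_{j-1}f}$ lives in $\{|\xi|\lesssim 2^j\}$ and $\bigl|\,|\zeta|^s-|\zeta-\xi|^s\,\bigr|\lesssim 2^{j(s-1)}|\xi|$ on the relevant frequency set, each term equals $2^{j(s-1)}$ times a fixed Calder\'on--Zygmund operator applied to $(\nabla S_{j-1}f)$ times a scale-$2^j$ mollification of $\Delta_j g$, hence is $\lesssim 2^{j(s-1)}\|\nabla S_{j-1}f\|_\infty\|\Delta_j g\|_p\lesssim 2^{js}\|f\|_\infty\|\Delta_j g\|_p$ by Bernstein; summing in $j$ through the Littlewood--Paley theorem yields $\lesssim\|f\|_\infty\|\Lambda^s g\|_p$. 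The high--low and resonant parts, and the matching parts of $g\Lambda^s f$, require no cancellation: each is a frequency-localized product of $f$ with a dyadic block of $\Lambda^s g$ (or of $g$), summed the same way. For $s_1=0$ with $1<p_1<\infty$ the same scheme works after replacing the crude $\|\nabla S_{j-1}f\|_\infty$ bound by a Fefferman--Stein maximal estimate, and exchanging $f$ and $g$ covers $s_2=0$.

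The hard part is precisely this vanishing-index case: the symmetric kernel split that makes the main range effortless collapses there, so the gain must instead be wrung out of the commutator symbol $|\zeta|^s-|\zeta-\xi|^s$, which forces the full paraproduct bookkeeping and a careful Bernstein/maximal-function estimate trading $\|\nabla S_{j-1}f\|$ for $2^j\|f\|$ without a logarithmic loss. A secondary point is that $\mathcal S_\sigma$ characterizes the Bessel-potential space $H^{\sigma,q}$ (equivalently $\dot F^\sigma_{q,2}$), not the Besov/Gagliardo space $\dot B^\sigma_{q,q}$, so the second step must invoke the vector-valued square-function theorem rather than the elementary double-integral seminorm, which would not give the exact exponents $p_1,p_2$.
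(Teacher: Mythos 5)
First, a point of order: the paper does not prove this lemma at all --- it is imported verbatim from Kenig--Ponce--Vega (their Theorem A.8), whose own argument runs through the bilinear Fourier multiplier with symbol $|\xi+\eta|^{s}-|\xi|^{s}-|\eta|^{s}$, a decomposition of the $(\xi,\eta)$-plane, and Coifman--Meyer theory. So your reconstruction can only be measured against the literature, not against anything in the text. Your architecture is a recognized alternative route: the pointwise identity reducing the trilinear expression to $-c\int\bigl(f(x)-f(y)\bigr)\bigl(g(x)-g(y)\bigr)|x-y|^{-N-s}\,dy$ is correct (and the numerator algebra checks out), as is the symbol estimate $\bigl|\,|\zeta|^{s}-|\zeta-\xi|^{s}\bigr|\lesssim 2^{j(s-1)}|\xi|$ in the commutator step.

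There are, however, two genuine gaps. (1) The main-range step invokes $\|\mathcal S_{\sigma}h\|_{q}\leqslant C\|\Lambda^{\sigma}h\|_{q}$ for all $1<q<\infty$. Stein's square-function characterization of $H^{\sigma,q}$ gives this only for $q>2N/(N+2\sigma)$, and the inequality is known to fail below that threshold. Since the lemma allows, say, $s_{1}$ small and $p_{1}$ close to $1$ (with $p_{2}$ large), the Cauchy--Schwarz argument does not cover the stated range of exponents; this is precisely why KPV do not argue this way in the interior range either. (2) In the endpoint case the claim that ``the high--low and resonant parts, and the matching parts of $g\Lambda^{s}f$, require no cancellation'' is false. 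With only $\|f\|_{\infty}$ under control, $\Lambda^{s}f$ is not controlled at all: for $f=\sum_{j\leqslant J}2^{-js}e^{i2^{j}x_{1}}\phi(x)$ one has $\|f\|_{\infty}=O(1)$ while $\|g\Lambda^{s}f\|_{p}\sim\sqrt{J}\to\infty$, and this contribution lives entirely in the high-$f$/low-$g$ regime. That portion of $g\Lambda^{s}f$ therefore cannot be bounded separately; it must be paired with the corresponding portion of $\Lambda^{s}(fg)$ to form the commutator $[\Lambda^{s},S_{j-1}g]\Delta_{j}f$, exactly parallel to the low--high case you did treat. The resonant piece also needs the usual $s>0$ off-diagonal summation rather than a direct appeal to the Littlewood--Paley theorem, since its output is not frequency-localized. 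All of this is repairable with standard paraproduct bookkeeping, but as written the proof does not close over the full range of indices claimed in the lemma.
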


\begin{lem}\label{L3.3}
Let $u$ be the solution of   problem \eqref{1.1}-\eqref{1.2} as shown in Theorem \ref{T2.2}. Then
\begin{align}\label{3.5}
\|\psi \Lambda^\alpha u\|\leqslant C(\|\psi u\|+\|\psi \nabla u\|)+CR^{-\frac{\alpha}{2}}\|u\|_{H^1}.
\end{align}
\end{lem}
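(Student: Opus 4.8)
The plan is to localise the nonlocal operator $\Lambda^\alpha$ near the support of $\psi$ by means of the Kenig--Ponce--Vega commutator estimate (Lemma \ref{L3.5.}), loading all the fractional smoothness onto the cut-off $\psi$, whose Riesz potentials decay in $R$ by \eqref{3.2}--\eqref{3.4}. The starting point is the algebraic identity
\[\psi\,\Lambda^\alpha u=\Lambda^\alpha(\psi u)-u\,\Lambda^\alpha\psi-\big(\Lambda^\alpha(\psi u)-\psi\,\Lambda^\alpha u-u\,\Lambda^\alpha\psi\big),\]
whence
\[\|\psi\,\Lambda^\alpha u\|\leqslant\|\Lambda^\alpha(\psi u)\|+\|u\,\Lambda^\alpha\psi\|+\big\|\Lambda^\alpha(\psi u)-\psi\,\Lambda^\alpha u-u\,\Lambda^\alpha\psi\big\|=:I_1+I_2+I_3,\]
and the three terms are estimated in turn; all of them make sense in $L^2$ because $\psi u\in H^1$, $\Lambda^\alpha u\in L^2$, and $\Lambda^\alpha\psi=\Lambda^{\alpha-1}(\Lambda\psi)$ is a Riesz potential of the bounded compactly supported function $\Lambda\psi$, hence decays at infinity and lies in every $L^q$ with $q$ large.

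For $I_1=\|\psi u\|_{\dot H^\alpha}$, since $0<\alpha<1$ I use the interpolation inequality $\|w\|_{\dot H^\alpha}\leqslant C\|w\|^{1-\alpha}\|w\|_{\dot H^1}^{\alpha}\leqslant C(\|w\|+\|\nabla w\|)$ with $w=\psi u$. Because $\nabla(\psi u)=\psi\nabla u+u\nabla\psi$ and $\nabla\psi$ is supported in $\{R\leqslant|x|\leqslant 2R\}$ with $|\nabla\psi|\leqslant CR^{-1}$ there (see \eqref{3.2}), one gets $\|u\nabla\psi\|\leqslant CR^{-1}\|u\|$, and therefore $I_1\leqslant C(\|\psi u\|+\|\psi\nabla u\|)+CR^{-1}\|u\|$; since $R\geqslant1$ and $\alpha/2<1$, the last term is dominated by $CR^{-\alpha/2}\|u\|_{H^1}$.

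For $I_2$ I apply Hölder with the conjugate pair $\bigl(\tfrac{2N}{N-\alpha},\tfrac{2N}{\alpha}\bigr)$: the Sobolev embedding $H^1\hookrightarrow L^{2N/(N-\alpha)}$ (which holds for $\alpha\in(1/2,1)$) together with \eqref{3.4} applied with $q=2N/\alpha$ — precisely the bound $\|\Lambda^\alpha\psi\|_{2N/\alpha}\leqslant CR^{-\alpha/2}$ already recorded in \eqref{3.4a} — gives $I_2\leqslant\|u\|_{2N/(N-\alpha)}\,\|\Lambda^\alpha\psi\|_{2N/\alpha}\leqslant CR^{-\alpha/2}\|u\|_{H^1}$. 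For $I_3$ I invoke Lemma \ref{L3.5.} with $f=\psi$, $g=u$, $s=\alpha$, $s_1=\alpha$, $s_2=0$, and Hölder exponents $p_1=2N/\alpha$, $p_2=2N/(N-\alpha)$ (note $\tfrac1{p_1}+\tfrac1{p_2}=\tfrac12$ and all exponents lie in $(1,\infty)$); this yields $I_3\leqslant C\|\Lambda^\alpha\psi\|_{2N/\alpha}\|u\|_{2N/(N-\alpha)}\leqslant CR^{-\alpha/2}\|u\|_{H^1}$, exactly as for $I_2$. Adding the bounds for $I_1,I_2,I_3$ gives \eqref{3.5}.

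The only delicate point is the choice of exponents in $I_2$ and $I_3$: the gain $\|\Lambda^s\psi\|_q\leqslant CR^{-(s-N/q)}$ produces a negative power of $R$ only when the fractional smoothness $s$ carried by $\psi$ is taken as large as possible (here $s=\alpha$, rather than a symmetric split $\alpha/2$ on each factor, which would instead give a positive power $R^{1-\alpha}$) and the companion Lebesgue exponent on $u$ is pushed down to the bottom of the $H^1$-Sobolev range, namely $L^{2N/(N-\alpha)}$; the balanced choice $p_1=2N/\alpha$, $p_2=2N/(N-\alpha)$ is exactly the one that makes the resulting power of $R$ equal to $-\alpha/2$. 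Everything else is routine.
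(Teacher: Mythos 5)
Your proposal is correct and follows essentially the same route as the paper: the paper also writes $\psi\Lambda^\alpha u=\Lambda^\alpha(\psi u)-\bigl(\Lambda^\alpha(\psi u)-\psi\Lambda^\alpha u\bigr)$, bounds the first piece by the interpolation $\|\psi u\|_{\dot H^\alpha}\leqslant C\|\psi u\|^{1-\alpha}\|\psi u\|_{\dot H^1}^{\alpha}$ together with $\|u\nabla\psi\|\leqslant CR^{-1}\|u\|$, and controls the commutator by Lemma \ref{L3.5.} with $s_1=\alpha$, $s_2=0$ and the same H\"older pair $\bigl(\tfrac{2N}{\alpha},\tfrac{2N}{N-\alpha}\bigr)$, arriving at the identical $CR^{-\alpha/2}\|u\|_{H^1}$ bound. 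The only cosmetic difference is that you separate out $\|u\Lambda^\alpha\psi\|$ as its own term $I_2$, whereas the paper absorbs it into the commutator estimate in one line.
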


\begin{proof}
Since
\begin{align}\label{3.6}
\|\psi \Lambda^\alpha u\|\leqslant \|\Lambda^\alpha(\psi u)-\psi\Lambda^\alpha u\|
+\|\Lambda^\alpha(\psi u)\|,
\end{align}
by Lemma \ref{L3.5.} (taking  $s_1=\alpha, s_2=0$ there),  the H\"{o}lder inequality and  \eqref{3.2}-\eqref{3.4a},  we have
\begin{align*}
&\|\Lambda^\alpha (\psi u)-\psi \Lambda^\alpha u\|\leqslant \|u\Lambda^\alpha \psi \| +C
\| \Lambda^\alpha\psi\|_{\frac{2N}{\alpha}}
\|u\|_{\frac{2N}{N-\alpha}}
\leqslant C
\| \psi\|_{\dot{H}^{\alpha,\frac{2N}{\alpha}}}
\|u\|_{\frac{2N}{N-\alpha}}
\leqslant C R^{-\frac{\alpha}{2}} \|u\|_{H^1},\\
&
\|\Lambda^\alpha (\psi u)\|=\|\psi u\|_{\dot{H}^\alpha}\leqslant C\|\psi u\|^{1-\alpha}
\|\psi u\|^\alpha_{\dot{H}^1}\leqslant C(\|\psi u\|+\|\psi\nabla u\|)+CR^{-1}\|u\|.
\end{align*}
Inserting above inequalities  into \eqref{3.6} yields estimate \eqref{3.5}.
\end{proof}

\begin{lem} \label{3.3**} Let $S(t)(u_{0},u_{1})=(u(t),u_{t}(t))$, with $(u_{0},u_{1})\in \mathcal{B}$. Then for any $\epsilon>0$, there exist positive constants $K=K (\epsilon)$ and $T_{0}=T_{0}(R_{1})$ such that
\begin{align*}
\|(u(t),u_{t}(t))\|_{\cal H(B^{C}_{2R})}<\epsilon\ \ as \ \ R\geqslant K, \ t\geqslant T_{0},
\end{align*}
where $B_{R}$ is the ball centered at zero with radius $R$ in $\mathbb{R}^N,  B_{R}^{C}=\mathbb{R}^N\setminus B_{R}$.
 \end{lem}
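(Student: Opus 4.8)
This is the \emph{uniform tail estimate}, carried out with the commutator tools of Lemmas \ref{L3.5.} and \ref{L3.3} so as to cope with the nonlocal damping $(-\Delta)^\alpha u_t$. For $(u_0,u_1)\in\mathcal B$ I would test Eq. \eqref{1.1} with the multiplier $\psi^2(u_t+\epsilon u)$, $0<\epsilon\ll1$, where $\psi(x)=K_\delta(|x|/R)$ as above (so $\psi\equiv1$ on $B_{2R}^C$, $\mathrm{supp}\,\nabla\psi\subset\{R\le|x|\le2R\}$, $|\nabla\psi|\le CR^{-1}$), and introduce the localized energy
\[
\mathcal E_R(t)=\tfrac12\!\int\!\psi^2\big(|u_t|^2+|\nabla u|^2+|u|^2\big)dx+\!\int\!\psi^2G(u)\,dx-\!\int\!\psi^2fu\,dx+\epsilon\!\int\!\psi^2\big(uu_t+\tfrac12|u|^2\big)dx .
\]
By the sign/coercivity conditions of Remark \ref{R2.4} (which also absorb the supercritical contribution through $d_0\int\psi^2|u|^{p+1}$) and $0\le\psi\le1$, one gets, for $\epsilon$ small, $\mathcal E_R(t)\ge\kappa\|(u,u_t)(t)\|_{\mathcal H(B_{2R}^C)}^2-C\|f\|_{L^2(B_R^C)}^2$, while $\mathcal E_R(0)\le C(R_1)$ uniformly in $R$ since $\mathcal B$ is bounded in $\mathcal H$. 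The target is the differential inequality
\[
\tfrac{d}{dt}\mathcal E_R(t)+\kappa\,\mathcal E_R(t)\le CR^{-\delta}\big(1+\|u_t(t)\|_{H^\alpha}^2+\|u(t)\|_{H^1}^2\big)+C\|f\|_{L^2(B_R^C)}^2
\]
for some $\delta=\delta(\alpha,N)>0$, whose right-hand side is controlled uniformly in $t$ by the dissipativity estimates \eqref{2.1} and \eqref{2.3}.

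All contributions except the structural damping are handled exactly as on a bounded domain. Pairing $u_{tt},u_t,u,g(u),f$ with $\psi^2(u_t+\epsilon u)$ reconstructs $\tfrac{d}{dt}$ of the pieces of $\mathcal E_R$, the nonnegative dissipation $\int\psi^2|u_t|^2$, and the coercive part $\epsilon\big(\|\psi\nabla u\|^2+\|\psi u\|^2+\int\psi^2g(u)u\,dx\big)$ (again by Remark \ref{R2.4}); pairing $-\Delta u$ with $\psi^2(u_t+\epsilon u)$ adds only remainders of the form $\int\psi\,\nabla\psi\cdot\nabla u\,(u_t\ \text{or}\ u)$, supported on the annulus, hence $\le CR^{-1}C(R_0,\|f\|)$; the stray term $\epsilon\|\psi u_t\|^2$ is absorbed. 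All of these go into $\kappa\mathcal E_R$ or the right-hand side above.

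The genuinely new term is the structural damping against $\psi^2(u_t+\epsilon u)$. For $w\in\{u_t,u\}$ I would slide $\Lambda^\alpha$ off the cut-off via
\[
\Lambda^\alpha(\psi^2w)=\psi^2\Lambda^\alpha w+w\,\Lambda^\alpha(\psi^2)+\big(\Lambda^\alpha(\psi^2w)-\psi^2\Lambda^\alpha w-w\,\Lambda^\alpha(\psi^2)\big).
\]
Then $\big((-\Delta)^\alpha u_t,\psi^2u_t\big)=\big(\Lambda^\alpha u_t,\Lambda^\alpha(\psi^2u_t)\big)=\|\psi\Lambda^\alpha u_t\|^2+(\text{errors})\ge(\text{errors})$ provides genuine dissipation, while $\epsilon\big(\Lambda^\alpha u_t,\Lambda^\alpha(\psi^2u)\big)=\tfrac{\epsilon}{2}\tfrac{d}{dt}\|\psi\Lambda^\alpha u\|^2+(\text{errors})$, where by Lemma \ref{L3.3} the quantity $\|\psi\Lambda^\alpha u\|^2\le C(\|\psi u\|^2+\|\psi\nabla u\|^2)+CR^{-\alpha}\|u\|_{H^1}^2$ is of a size already present in $\mathcal E_R$ (enlarged harmlessly by $\tfrac{\epsilon}{2}\|\psi\Lambda^\alpha u\|^2$ for $\epsilon$ small). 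The error terms $w\,\Lambda^\alpha(\psi^2)$ and the Kenig commutator are bounded via Lemma \ref{L3.5.} together with the decay estimates \eqref{3.4a} and \eqref{3.4} applied to $\psi^2$: e.g. $|(\Lambda^\alpha u_t,u_t\Lambda^\alpha(\psi^2))|\le\|u_t\|_{\dot H^\alpha}\|u_t\|_r\|\Lambda^\alpha(\psi^2)\|_q\le CR^{-\delta}\|u_t\|_{H^\alpha}^2$, choosing $q$ slightly above $N/\alpha$ so that $\alpha-N/q>0$ and $H^\alpha\hookrightarrow L^r$ with $\tfrac1r=\tfrac12-\tfrac1q$; the $w=u$ errors are treated identically with $\|u\|_{H^1}$ in place of $\|u_t\|_{H^\alpha}$. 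This closes the differential inequality.

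Finally, Gronwall's lemma yields $\mathcal E_R(t)\le\mathcal E_R(0)e^{-\kappa t}+CR^{-\delta}\le C(R_1)e^{-\kappa t}+CR^{-\delta}$, whence $\|(u,u_t)(t)\|_{\mathcal H(B_{2R}^C)}^2\le C(R_1)e^{-\kappa t}+CR^{-\delta}+C\|f\|_{L^2(B_R^C)}^2$; choosing first $R\ge K(\epsilon)$ so that the last two terms are $<\epsilon^2/2$, and then $t\ge T_0$ large enough that $C(R_1)e^{-\kappa t}<\epsilon^2/2$, delivers the claim. I expect the third paragraph to be the crux: one must make sure that \emph{every} error coming from commuting $\Lambda^\alpha$ past $\psi^2$ — in both the $u_t$ and the $\epsilon u$ multiplier — carries a strictly positive power of $1/R$ paired with a factor controlled by \eqref{2.1} and \eqref{2.3}, and that the $\tfrac{\epsilon}{2}\tfrac{d}{dt}\|\psi\Lambda^\alpha u\|^2$ contribution is genuinely absorbable; this is exactly where the quantitative forms of Lemmas \ref{L3.5.} and \ref{L3.3} and of the decay bounds \eqref{3.4a} and \eqref{3.4} are indispensable.
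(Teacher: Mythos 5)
Your proposal follows essentially the same route as the paper: the multiplier $\psi^{2}(u_t+\epsilon u)$, the localized energy with the $\tfrac{\epsilon}{2}\|\psi\Lambda^{\alpha}u\|^{2}$ term controlled by Lemma \ref{L3.3}, the Kenig--Ponce--Vega commutator estimate of Lemma \ref{L3.5.} combined with the decay bounds \eqref{3.4a}--\eqref{3.4} to extract a factor $R^{-\alpha/2}$ from the structural-damping errors, and a Gronwall argument using the uniform bounds \eqref{2.1}. The argument is correct and matches the paper's proof in all essentials.
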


\begin{proof}
Using the multiplier $\psi^{2}(u_{t}+\epsilon u)$ in Eq. \eqref{1.1} and making use of the boundedness of $(u,u_t)$ in $\mathcal{H}_{\alpha}$,  we have
\begin{align}
\frac{d}{dt}H_4(\xi_u)+\Phi_2(\xi_u)
=&-2(\psi\nabla\psi (u_{t}+\epsilon u), \nabla u)
+\left(\Lambda^\alpha u_t, \psi^2\Lambda^\alpha(u_{t}+\epsilon u)
-\Lambda^\alpha(\psi^2(u_t+\epsilon u))\right)\nonumber\\
\leqslant&\epsilon^2\|\psi\nabla u\|^{2}+C(\|u_t\nabla\psi \|^{2}
+\|u \nabla\psi \|^{2})\nonumber\\
&  +\Big(\|\psi^2\Lambda^\alpha u_{t}-\Lambda^\alpha(\psi^2u_t)\|+\epsilon\|\psi^2\Lambda^\alpha u-\Lambda^\alpha(\psi^2u)\|\Big)\|\Lambda^\alpha u_t\|\nonumber\\
\leqslant&\epsilon^2\|\psi\nabla u\|^{2}+C(\|u_t\nabla\psi \|^{2}
+\|u \nabla\psi \|^{2})+CR^{-\frac{\alpha}{2}}(\|u_t\|_{H^\alpha}^2+\|u\|_{H^1}^2),\label{3.10}
\end{align}
where
\begin{align*}
H_4(\xi_u)=&\frac{1}{2}\Big(\|\psi u_{t}\|^2+\|\psi\nabla u\|^2+\|\psi u\|^2
+2\int\psi^2(G(u)-fu)dx\Big)\nonumber\\
 &+\epsilon\Big((\psi^2u,u_{t})+\frac{1}{2}(\|\psi u\|^{2}+\|\psi \Lambda^\alpha u\|^2)\Big),\\
\Phi_2(\xi_u) =&(1-\epsilon)\|\psi u_{t}\|^{2}+\|\psi \Lambda^\alpha u_{t}\|^{2}
+\epsilon (\|\psi\nabla u\|^2+\|\psi u\|^2+\int\psi^2g(u)udx-(\psi^2u,f)),
\end{align*}
and where we have used Lemma \ref{L3.5.} (with $s_1=\alpha, s_2=0$ there), and formula \eqref{3.4}  to get the estimates:
 \begin{align}&
\|\psi^2\Lambda^\alpha u_{t}-\Lambda^\alpha(\psi^2u_t)\|\leqslant \|u_{t}\Lambda^\alpha\psi^2\|+ C\|\Lambda^\alpha\psi^2\|_{\frac{2N}{\alpha}}
\|u_t\|_{\frac{2N}{N-\alpha}}\nonumber\\
\leqslant& C(\|\Lambda^\alpha\psi^2-2\psi\Lambda^\alpha\psi\|_{\frac{2N}{\alpha}}
+2\|\psi\Lambda^\alpha\psi\|_{\frac{2N}{\alpha}})
\|u_t\|_{\frac{2N}{N-\alpha}}\nonumber\\
\leqslant &C\|\psi\|_{ \frac{4N}{\alpha}}\|\psi\|_{\dot{H}^{\alpha, \frac{4N}{\alpha}}}\|u_t\|_{H^{\alpha/2}}
\leqslant CR^{-\alpha/2}\|u_t\|_{H^{\alpha}},\label{m3.7}
\end{align}
and by the similar argument as to  the estimate \eqref{m3.7}, we obtain
\[
\|\psi^2\Lambda^\alpha u-\Lambda^\alpha(\psi^2u)\|
\leqslant CR^{-\alpha/2}\|u\|_{H^1}.
\]
By  Lemma \ref{L3.3} and Remark \ref{R2.4},   we have
\begin{align}&
H_4(\xi_u) \geqslant \kappa(\|\psi u_{t}\|^2+\|\psi u\|^{2}+\|\psi\nabla u\|^2+d_0\int \psi^2|u|^{p+1}dx)
-C\|\psi f\|^2,\label{3.11.}\\
&\Phi_2(\xi_u)-\epsilon(\epsilon\|\psi\nabla u\|^{2}+ H_4(\xi_u))\nonumber\\
 \geqslant& \kappa(\|\psi  u_t\|^{2}+\|\psi \Lambda^\alpha u_t\|^{2}+\|\psi\nabla u\|^{2}+\|\psi u\|^{2})
 -CR^{-\frac{\alpha}{2}}\|u\|^2_{H^1}\label{3.12}
\end{align}
for $ \epsilon>0$ suitably small. Inserting \eqref{3.12} into \eqref{3.10} and using \eqref{3.2}, \eqref{3.11.}, we have
\begin{align}&
\frac{d}{dt}H_4(\xi_u)+\kappa H_4(\xi_u)\leqslant CR^{-\frac{\alpha}{2}}(\|u\|^2_{H^1}+\|u_t\|^2_{H^{\alpha}})+C\|\psi f\|^2, \nonumber\\
&H_4(\xi_u)\leqslant  C H_4(\xi_u(0))e^{-\kappa t}+C(R^{-\frac{\alpha}{2}}+\|f\|_{L^2(B^C_R)}^{2}). \label{3.13}
\end{align}
The combination of  \eqref{3.11.} and \eqref{3.13} implies the conclusion  of Lemma \ref{3.3**}.
 \end{proof}

\begin{theorem}\label{T3.7}
Let Assumption \ref{A2.3}  be valid.  Then the solution semigroup $S(t) (=S^\alpha(t))$ possesses a global attractor $\mathcal{A} (=\mathcal{A}_\alpha)$  in $\mathcal{H}$ for each $\alpha\in (1/2, 1)$.
\end{theorem}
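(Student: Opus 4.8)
The plan is to obtain Theorem \ref{T3.7} from the classical criterion for the existence of a global attractor (Babin--Vishik / Temam / Hale): since $\{S^\alpha(t)\}_{t\geqslant 0}$ is a semigroup on $\mathcal{H}$ which is H\"older continuous on $\mathcal{H}$ (as noted at the end of Section \ref{Sec2}) and which possesses the bounded absorbing set $B_0$, it suffices to verify that $S(t)$ is asymptotically compact in $\mathcal{H}$, i.e. that whenever $t_n\to\infty$ and $\{\xi_n\}\subset\mathcal{H}$ is bounded, the sequence $\{S(t_n)\xi_n\}$ has a subsequence converging in $\mathcal{H}$. Because $B_0$ absorbs every bounded set and the forward-invariant absorbing set $\mathcal{B}$ (which is bounded in $\mathcal{H}_\alpha$, see \eqref{2.3}--\eqref{2.4}) absorbs $B_0$, it is enough to check this for $\xi_n\in\mathcal{B}$. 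The attractor will then be the $\omega$-limit set $\mathcal{A}_\alpha=\omega(\mathcal{B})=\bigcap_{s\geqslant 0}\bigl[\bigcup_{t\geqslant s}S(t)\mathcal{B}\bigr]_{\mathcal{H}}$, and, since $\mathcal{A}_\alpha\subset\mathcal{B}$, it is automatically bounded in $\mathcal{H}_\alpha$.

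For the asymptotic compactness I would split $\mathbb{R}^N=B_{2R}\cup B_{2R}^{C}$ with $R$ large, to be chosen. Fix $\epsilon>0$. On the exterior, Lemma \ref{3.3**} furnishes $K(\epsilon)$ and $T_0=T_0(R_1)$ with $\|S(t)\xi\|_{\mathcal{H}(B_{2R}^{C})}<\epsilon$ for all $R\geqslant K(\epsilon)$, $t\geqslant T_0$, $\xi\in\mathcal{B}$ --- this is exactly the step that compensates for the loss of Sobolev compactness on $\mathbb{R}^N$. Fixing $R=K(\epsilon)$, the additional regularity \eqref{2.3}--\eqref{2.4}, combined with the forward invariance of $\mathcal{B}$ (which removes the factors singular near $t=0$), shows that $S(t)\mathcal{B}$ is bounded in $\mathcal{H}_\alpha=H^{1+\alpha}\times H^\alpha$ uniformly in $t\geqslant 0$; hence $\{S(t_n)\xi_n\}$, restricted to the bounded domain $B_{2R}$, is bounded in $H^{1+\alpha}(B_{2R})\times H^\alpha(B_{2R})$. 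Since $\alpha>0$ and, for $1\leqslant p<p_\alpha$, the chain $H^{1+\alpha}\hookrightarrow H^{2\alpha}\hookrightarrow L^{p+1}$ holds with the second embedding strictly subcritical, the Rellich--Kondrachov theorem gives the compact embedding
\[
H^{1+\alpha}(B_{2R})\times H^{\alpha}(B_{2R})\ \hookrightarrow\hookrightarrow\ \bigl(H^{1}\cap L^{p+1}\bigr)(B_{2R})\times L^{2}(B_{2R})=\mathcal{H}(B_{2R}),
\]
so $\{S(t_n)\xi_n|_{B_{2R}}\}$ is precompact in $\mathcal{H}(B_{2R})$ and admits a finite $\epsilon$-net there. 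Combining the exterior smallness with the interior precompactness, for each $\epsilon>0$ the set $\bigl\{S(t)\xi:\ t\geqslant T_0,\ \xi\in\mathcal{B}\bigr\}$ has a finite $2\epsilon$-net in $\mathcal{H}$; as $\epsilon$ is arbitrary and $T_0$ is fixed, the Kuratowski measure of noncompactness of $\bigcup_{t\geqslant T_0}S(t)\mathcal{B}$ vanishes, i.e. $S(t)$ is asymptotically compact (indeed asymptotically smooth). The classical theorem then yields the nonempty compact fully invariant set $\mathcal{A}_\alpha=\omega(\mathcal{B})$ attracting every bounded subset of $\mathcal{H}$, which is the desired global attractor.

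I expect the genuinely hard work not to be this final split-domain / measure-of-noncompactness argument, which is routine once its two inputs are in hand, but rather those two inputs, both established upstream: the exterior tail estimate of Lemma \ref{3.3**}, whose proof must control the commutator $[\psi^{2},\Lambda^{\alpha}]$ produced by the nonlocal structural damping via the Kenig--Ponce--Vega-type estimate of Lemma \ref{L3.5.} and the fractional cut-off bounds \eqref{3.2}--\eqref{3.4}; and the uniform interior regularity of Theorem \ref{T2.2}(ii), obtained in the supercritical range $p<p_\alpha$ by exploiting the smoothing of $(-\Delta)^{\alpha}u_{t}$. The only other points requiring care are to run the compactness argument along the forward-invariant set $\mathcal{B}$ rather than $B_0$, so that \eqref{2.3}--\eqref{2.4} hold along the whole orbit and $\mathcal{A}_\alpha$ inherits its $\mathcal{H}_\alpha$-bound, and to rely on the H\"older continuity of $\{S^\alpha(t)\}$ on $\mathcal{H}$ to justify the passage to the $\omega$-limit.
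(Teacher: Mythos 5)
Your proof is correct, but it reaches asymptotic compactness by a different mechanism than the paper. You combine the tail estimate of Lemma \ref{3.3**} with the \emph{pointwise-in-time} compact embedding $\mathcal{H}_{\alpha}(B_{2R})=H^{1+\alpha}(B_{2R})\times H^{\alpha}(B_{2R})\hookrightarrow\hookrightarrow\mathcal{H}(B_{2R})$ (valid since $p+1<\tfrac{2N}{N-4\alpha}<\tfrac{2N}{N-2(1+\alpha)}$ keeps the $L^{p+1}$ component strictly subcritical), which in effect produces a compact absorbing set and hence total boundedness of $\bigcup_{t\geqslant T_0}S(t)\mathcal{B}$. The paper instead never invokes a compact embedding of $\mathcal{H}_\alpha(B_{2R})$ into $\mathcal{H}(B_{2R})$: it applies the quasi-stability estimate \eqref{2.6} to the difference of two shifted trajectories $w^{m,n}$, reducing the $\mathcal{H}_{-\alpha}$-distance at large times to $\sup_{s\leqslant T}\|(w^{m,n},w^{m,n}_t)(s)\|_{L^2\times L^2}$, handles the $L^2(B_{2R})$ part of that lower-order term by the Aubin--Lions compactness of Lemma \ref{L2.1} (using the time-derivative bounds in \eqref{2.3}) and the $L^2(B_{2R}^C)$ part by the same tail estimate, and only at the very end interpolates between $\mathcal{H}_{-\alpha}$ and the $\mathcal{H}_\alpha$-bound on $\mathcal{B}$ to upgrade to $\mathcal{H}$-compactness. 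Both arguments consume the same two hard inputs (the commutator-based tail estimate and the smoothing of Theorem \ref{T2.2}(ii)), and for the bare existence statement yours is the more elementary and arguably shorter; the paper's quasi-stability route is the one that would additionally deliver finite fractal dimension and exponential attractors, and it degrades more gracefully if the regularity gain were too weak for the energy-space embedding to be locally compact in the supercritical range. One point to make explicit if you write this up: when you lift the finite $\epsilon$-net from $\mathcal{H}(B_{2R})$ to $\mathcal{H}(\mathbb{R}^N)$, choose the net points from the orbit set itself so that the exterior smallness applies to them as well, and note that the $\mathcal{H}$-norm splits over $B_{2R}\cup B_{2R}^C$ only up to a fixed constant because of the $L^{p+1}$ component.
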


 \begin{proof}
It is enough to show that  $S(t)$ is asymptotically compact on $\mathcal{B}$ with respect to  $\mathcal{H}$-topology.  Let $\{(u^{n}_{0},u^{n}_{1})\}$ be a bounded sequence in $\mathcal{B},
 S(t)(u^{n}_{0},u^{n}_{1})=(u^{n}(t),u^{n}_{t}(t))$.  Applying  \eqref{2.6} to
 \begin{align*}
 w^{m,n}(t)=u^{m}(t+t_{m}-T)-u^{n}(t+t_{n}-T), \ \ \hbox{with}\ \  t_{m}> t_{n}> T>0, t\geqslant0,
 \end{align*}
and making use of the fact $L^2\hookrightarrow H^{-2}$, we obtain
\begin{align*}
\|(w^{m,n},w^{m,n}_{t})(t)\|^{2}_{\mathcal{H}_{-\alpha}}
\leqslant Ce^{-\kappa t}+C\sup_{0\leqslant s\leqslant t}\|(w^{m,n},w^{m,n}_{t})(s)\|_{ L^2\times L^2}^{2}.
\end{align*}
Taking $t=T$  yields
\begin{align*}
&\|(u^{m}(t_{m})-u^{n}(t_{n}),u^{m}_{t}(t_{m})-u^{n}_{t}(t_{n}))\|^2_{\mathcal{H}_{-\alpha}}
\nonumber\\
\leqslant&  Ce^{-\kappa T}+C\big(\sup_{0\leqslant s\leqslant T}\|(w^{m,n},w^{m,n}_{t})(s) \|^{2}_{L^{2}(B_{2R}^{C})\times L^{2}(B_{2R}^{C})}\nonumber\\
&+\sup_{0\leqslant s\leqslant T}\|(w^{m,n},w^{m,n}_{t})(s)\|_{ L^{2}(B_{2R})\times L^{2}(B_{2R})}^2\big).\label{3.11}
\end{align*}
By  Lemma \ref{L2.1},
\begin{align*}&
\Pi_1=\left\{u\in L^\infty(0,T; H^{1+\alpha}(B_{2R}))\mid u_t\in L^2(0,T; H^\alpha(B_{2R}))\right\}\hookrightarrow\hookrightarrow C([0,T]; L^2(B_{2R})),\\
&\Pi_2=\left\{u_t\in L^\infty(0,T; H^\alpha(B_{2R}))\mid u_{tt}\in L^2(0,T;L^2(B_{2R}))\right\}\hookrightarrow\hookrightarrow C([0,T]; L^2(B_{2R})),
\end{align*}
thus the subsequence
\[\{(u^{m}(t_{m}), u^{m}_{t}(t_{m}))\}  \ \text{is  precompact  in}\ \ C([0,T]; L^2(B_{2R})\times L^2(B_{2R})).\]
Therefore,  for any $\epsilon>0$,   fixing $T: Ce^{-\kappa T}<\epsilon/4$, there must exist a $N_0>0$ such that when  $m, n\geqslant N_0$, $t_{m}-T>T_{0}, t_{n}-T>T_{0}$, and
\[\|(u^{m}(t_{m})-u^{n}(t_{n}),u^{m}_{t}(t_{m})-u^{n}_{t}(t_{n}))\|_{\mathcal{H}_{-\alpha}}
  <\frac{\epsilon}{4}+\frac{\epsilon}{4}+\frac{\epsilon}{4}<\epsilon,\]
i.e.,  the semigroup $S(t)$ is asymptotically compact on $\mathcal{B}$ with respect to the topology $\cal H_{-\alpha}$.
Taking account of  the boundedness of $\cal B$ in $\cal H_\alpha(\hookrightarrow \mathcal{H}\hookrightarrow H_{-\alpha})$, by the interpolation one easily sees that   $S(t)$ is
asymptotically compact on $\mathcal{B}$ with respect to  $\mathcal{H}$-topology. Therefore, $(S(t), \cal H)$ possesses a global attractor $\cal{A}$, and $\cal{A}\subset \cal B$ is bounded in $\cal H_{\alpha}$.
\end{proof}

\section{Upper semicontinuity of the   global attractors}\label{Sec4}

\begin{lem}\cite{WWQ} \label{L4.1}
Let $X, Y$ be two Banach spaces,  $X\hookrightarrow Y$, and  the semigroup $\{S^\alpha(t)\}_{t\geqslant 0}$ has a bounded  absorbing set $B_\alpha$ and a global attractor $\cal A_\alpha$ in $X$ for each $\alpha\in I$ (a subset of  $\mathbb{R}$). Assume that  the following assumptions hold:

(i) the union $\displaystyle\cup_{\alpha\in I}B_\alpha$ is bounded in $X$;

(ii) for any sequences $\{\alpha_n\}\subset I, \{x_n\}\subset \displaystyle\cup_{\alpha\in I}B_\alpha$
and   $t_n\rightarrow \infty $, the sequence $\{S^{\alpha_n}(t_n)x_n\}$ is precompact in $X$;

(iii) for any sequences $\{\alpha_n\}\subset I$ with $\alpha_n\rightarrow \alpha_0$ and $\{x_n\}\subset X$ with $x_n\rightarrow x_0$ in $X$,
\[\lim_{n\rightarrow\infty}\|S^{\alpha_n}(t)x_n- S^{\alpha_0}(t)x_0\|_Y=0, \ \ \forall t\geqslant  0.\]
Then
\[\lim_{\alpha\rightarrow \alpha_0}dist_X\{\cal A_\alpha,\cal A_{\alpha_0}\}=0.\]
\end{lem}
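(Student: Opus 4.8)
The plan is to argue by contradiction, exploiting the full invariance of the attractors $\mathcal{A}_\alpha$ to pull sequences back in time, assumption (ii) to extract subsequences converging in $X$, assumption (iii) to identify the limit in the weaker space $Y$, and the standard characterization of the global attractor $\mathcal{A}_{\alpha_0}$ as the set of points lying on complete bounded trajectories of $S^{\alpha_0}$.

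First I would suppose the conclusion fails, so that there exist $\varepsilon_0>0$, a sequence $\alpha_n\to\alpha_0$ in $I$, and points $y_n\in\mathcal{A}_{\alpha_n}$ with $\inf_{z\in\mathcal{A}_{\alpha_0}}\|y_n-z\|_X\geqslant \varepsilon_0$ for every $n$. Fix any $t_n\to\infty$. Since each $\mathcal{A}_{\alpha_n}$ is fully invariant, $S^{\alpha_n}(t_n)\mathcal{A}_{\alpha_n}=\mathcal{A}_{\alpha_n}$, so $y_n=S^{\alpha_n}(t_n)z_n$ for some $z_n\in\mathcal{A}_{\alpha_n}\subset\bigcup_{\alpha\in I}B_\alpha$ (here I use that the closed bounded absorbing set $B_\alpha$ contains $\mathcal{A}_\alpha$). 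By assumption (ii), $\{y_n\}$ is precompact in $X$, so along a subsequence $y_n\to y$ in $X$. The goal is then to show $y\in\mathcal{A}_{\alpha_0}$, which contradicts the choice of $y_n$.

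To produce a complete bounded trajectory through $y$, I would fix $k\in\mathbb{N}$ and, for $n$ large (so $t_n>k$), write $y_n=S^{\alpha_n}(k)\,w_n^{(k)}$ with $w_n^{(k)}:=S^{\alpha_n}(t_n-k)z_n$. Since $t_n-k\to\infty$ and $z_n\in\bigcup_\alpha B_\alpha$, assumption (ii) gives a further subsequence along which $w_n^{(k)}\to w^{(k)}$ in $X$; a diagonal extraction makes this hold simultaneously for all $k$. Assumption (iii) then yields $y_n=S^{\alpha_n}(k)w_n^{(k)}\to S^{\alpha_0}(k)w^{(k)}$ in $Y$, while $y_n\to y$ in $X\hookrightarrow Y$; hence $y=S^{\alpha_0}(k)w^{(k)}$. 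Applying the same argument to $S^{\alpha_n}(t_n-j)z_n=S^{\alpha_n}(k-j)S^{\alpha_n}(t_n-k)z_n$ for $j<k$ gives the consistency relations $w^{(j)}=S^{\alpha_0}(k-j)w^{(k)}$. Consequently $\gamma(t):=S^{\alpha_0}(t+k)w^{(k)}$, $t\geqslant -k$, is well defined independently of $k$, satisfies $S^{\alpha_0}(\tau)\gamma(t)=\gamma(\tau+t)$, and $\gamma(0)=y$; it is bounded in $X$ on each $[-k,0]$ because $\gamma(-k)=w^{(k)}$ lies in the $X$-closure of $\bigcup_\alpha B_\alpha$ and $S^{\alpha_0}$ maps bounded sets into bounded sets on bounded time intervals, and on $[0,\infty)$ because $\gamma(t)=S^{\alpha_0}(t)y$ is eventually inside $B_{\alpha_0}$. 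Thus $\gamma$ is a complete bounded trajectory of $S^{\alpha_0}$, so $y=\gamma(0)\in\mathcal{A}_{\alpha_0}$; since $y_n\to y$ in $X$, for large $n$ we get $\inf_{z\in\mathcal{A}_{\alpha_0}}\|y_n-z\|_X<\varepsilon_0$, a contradiction.

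\textbf{Main obstacle.} The crux is the mismatch between the two compactness inputs: (ii) provides convergence only in $X$, whereas the continuous dependence on the parameter in (iii) is available only in the weaker topology of $Y$. The delicate step is therefore the diagonal extraction producing one subsequence along which all pullbacks $w_n^{(k)}$ converge in $X$ together with their consistency relations, and then upgrading the identity $y=S^{\alpha_0}(k)w^{(k)}$ (obtained in $Y$) by invoking the already-established $X$-convergence $y_n\to y$. The remaining ingredients — invariance of $\mathcal{A}_\alpha$, the inclusion $\mathcal{A}_\alpha\subset\bigcup_\alpha B_\alpha$, and the description of $\mathcal{A}_{\alpha_0}$ through complete bounded trajectories — are routine facts of attractor theory.
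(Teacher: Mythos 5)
The paper offers no proof of this lemma at all --- it is imported verbatim from \cite{WWQ} --- so the comparison is with the standard argument for such statements rather than with anything in the text. Your proof is correct in substance but takes a genuinely longer route: you rebuild a complete bounded trajectory through the limit point $y$ by pulling back $k$ units for every integer $k$, diagonalizing, and then invoking the characterization of $\mathcal{A}_{\alpha_0}$ as the union of complete bounded orbits. The usual proof needs only one pullback: since $\mathcal{A}_{\alpha_0}$ attracts the bounded set $\cup_{\alpha\in I}B_\alpha$, fix $T$ so that $dist_X\{S^{\alpha_0}(T)\overline{\cup_\alpha B_\alpha},\mathcal{A}_{\alpha_0}\}<\varepsilon_0/2$; write $y_n=S^{\alpha_n}(T)z_n$ with $z_n\in\mathcal{A}_{\alpha_n}$; use (ii) to get $z_n\to z_0$ and $y_n\to y$ in $X$ along a subsequence (after expressing $z_n$ and $y_n$ as long-time images of points of $\mathcal{A}_{\alpha_n}$ via invariance); and use (iii) to identify $y=S^{\alpha_0}(T)z_0$ through the $Y$-topology. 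That point already lies within $\varepsilon_0/2$ of $\mathcal{A}_{\alpha_0}$, which contradicts $dist_X(y_n,\mathcal{A}_{\alpha_0})\geqslant\varepsilon_0$. Your route avoids any appeal to the attraction property of $\mathcal{A}_{\alpha_0}$ on $\cup_\alpha B_\alpha$, at the cost of the extra bookkeeping and of one soft spot.

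The soft spot is the boundedness of $\gamma$ at non-integer negative times. You justify it by asserting that $S^{\alpha_0}$ maps bounded sets into bounded sets uniformly on bounded time intervals; that is standard in applications but is not among hypotheses (i)--(iii). It can be repaired inside your own framework without new assumptions: for any fixed real $s\geqslant 0$ the sequence $S^{\alpha_n}(t_n-s)z_n$ lies in $\cup_\alpha B_\alpha$ (invariance of $\mathcal{A}_{\alpha_n}$) and is precompact in $X$ by (ii), and every $X$-limit point of it equals $\gamma(-s)$ by (iii) together with uniqueness of limits in $Y$; hence $\gamma(-s)$ belongs to the bounded set $\overline{\cup_\alpha B_\alpha}$ for every $s\geqslant 0$. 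With that patch, and the routine facts you list (full invariance, $\mathcal{A}_\alpha\subset B_\alpha$, the complete-bounded-orbit description of the attractor), your argument is complete and correct.
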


 \begin{theorem}\label{T4.2}
Let Assumption \ref{A2.3} be valid, with  $1\leqslant p< p_{\alpha_0}\equiv \frac{N+4\alpha_0}{(N-4\alpha_0)^+}, \alpha_0\in I\equiv(1/2, 1)$. Then the family of global attractors  $\{\mathcal{A}_\alpha\}_{\alpha\in I}$  as shown in Theorem \ref{T3.7} is upper semicontinuous at the point $\alpha_0$, i.e.,
\begin{align}\label{Equ4.1}
\lim_{\alpha\rightarrow \alpha_0}dist_{\cal H}\{\mathcal{A}_\alpha, \mathcal{A}_{\alpha_0}\}=0.
\end{align}
\end{theorem}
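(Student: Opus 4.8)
The plan is to verify the three hypotheses of Lemma \ref{L4.1} with $X = \mathcal{H}$ and $Y = \mathcal{H}_{-\alpha_0}$, for the index set $I = (1/2,1)$ (or a small neighborhood of $\alpha_0$ on which $p < p_\alpha$ still holds, which is possible since $p < p_{\alpha_0}$ and $p_\alpha$ depends continuously on $\alpha$). First I would establish the uniform absorbing property (i): the energy estimate \eqref{2.1} in Theorem \ref{T2.2} produces an absorbing ball $B_\alpha = B_0$ whose radius $R_1$ depends only on $R_0$ and $\|f\|$ and \emph{not} on $\alpha$ (this is exactly the content of Remark \ref{R2.3}(i)), so $\cup_{\alpha} B_\alpha$ is a single bounded set in $\mathcal{H}$. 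For (iii), continuity in the parameter, I would fix $t\geqslant 0$ and consider $\alpha_n\to\alpha_0$, $x_n\to x_0$ in $\mathcal{H}$; writing $z_n = u^{\alpha_n} - u^{\alpha_0}$, the difference solves an equation with forcing terms coming from $((-\Delta)^{\alpha_n} - (-\Delta)^{\alpha_0})u_t^{\alpha_0}$ and from $g(u^{\alpha_n}) - g(u^{\alpha_0})$. One runs the same multiplier argument as in the proof of \eqref{2.5}--\eqref{2.6} (multiplier $(I-\Delta)^{-\alpha_0}z_{n,t} + \epsilon z_n$), obtaining a Gronwall inequality in $\mathcal{H}_{-\alpha_0}$; the extra term involves $\|((-\Delta)^{\alpha_n}-(-\Delta)^{\alpha_0})u_t^{\alpha_0}\|$ against a negative-order norm of $z_{n,t}$, which one controls using the uniform bound $u_t^{\alpha_0}\in L^2(0,T;H^{\alpha_0})$ from \eqref{2.1} together with the fact that $|\xi|^{\alpha_n}\to|\xi|^{\alpha_0}$ uniformly on the support after a Littlewood--Paley truncation (split high and low frequencies: low frequencies give convergence by dominated convergence on the multiplier symbol, high frequencies are uniformly small in the negative Sobolev norm). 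This yields $\|S^{\alpha_n}(t)x_n - S^{\alpha_0}(t)x_0\|_{\mathcal{H}_{-\alpha_0}}\to 0$.

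The main work is hypothesis (ii): uniform asymptotic compactness in $\mathcal{H}$ along sequences with $\alpha_n\to\alpha_0$ (or merely $\alpha_n\in I$) and $t_n\to\infty$. Here I would mimic the proof of Theorem \ref{T3.7} but track $\alpha$-uniformity. The tail estimate of Lemma \ref{3.3**} must be made uniform in $\alpha$: inspecting its proof, the decay rates $R^{-\alpha/2}$ and the commutator constants from Lemmas \ref{L3.5.}, \ref{L3.3} are controlled by $\alpha$ ranging in a compact subinterval $[\alpha_0-\eta,\alpha_0+\eta]\subset(1/2,1)$, so for $\alpha$ near $\alpha_0$ one gets $\|(u^{\alpha}(t),u_t^{\alpha}(t))\|_{\mathcal{H}(B_{2R}^C)} < \epsilon$ for $R\geqslant K(\epsilon)$ and $t\geqslant T_0$, with $K, T_0$ independent of $\alpha$. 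On the fixed ball $B_{2R}$ one uses the regularity estimates \eqref{2.3}--\eqref{2.4}, which are $\alpha$-uniform except for the factor $C(\alpha) = (C\alpha/(1-\alpha))^{\alpha/(1-\alpha)}$; but since $\alpha$ stays bounded away from $1$, $C(\alpha)$ is bounded on $[\alpha_0-\eta,\alpha_0+\eta]$, so $\{(u^{\alpha_n}(t_n - T), u^{\alpha_n}_t(t_n-T))\}$ is uniformly bounded in $H^{1+\alpha_0-\eta}(B_{2R})\times H^{\alpha_0-\eta}(B_{2R})$ with $u_{tt}$ uniformly bounded in $L^2(t,t+1;L^2(B_{2R}))$. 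Then the Aubin--Lions--Simon Lemma \ref{L2.1} together with the compact embedding $H^{1+\alpha_0-\eta}(B_{2R})\hookrightarrow\hookrightarrow L^2(B_{2R})$ (valid on the bounded domain $B_{2R}$) gives precompactness of $\{S^{\alpha_n}(t_n)x_n\}$ in $\mathcal{H}_{-\alpha_0}$; combined with the boundedness in $\mathcal{H}_{\alpha_0-\eta}$ and interpolation $\mathcal{H}_{\alpha_0-\eta}\hookrightarrow\mathcal{H}\hookrightarrow\mathcal{H}_{-\alpha_0}$, this upgrades to precompactness in $\mathcal{H}$. I would actually run this through the quasi-stability inequality \eqref{2.6} exactly as in the proof of Theorem \ref{T3.7}: applying it to $w^{m,n}(t) = u^{\alpha_m}(t+t_m-T) - u^{\alpha_n}(t+t_n-T)$ reduces asymptotic compactness to the tail smallness plus the $C([0,T];L^2(B_{2R}))$ precompactness just described.

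With (i), (ii), (iii) in hand, Lemma \ref{L4.1} delivers $\lim_{\alpha\to\alpha_0} dist_{\mathcal{H}}\{\mathcal{A}_\alpha,\mathcal{A}_{\alpha_0}\} = 0$, which is \eqref{Equ4.1}. The expected main obstacle is the $\alpha$-uniformity of the parameter-dependent quasi-stability estimate \eqref{2.6} and of the commutator/tail estimates: one must check that the negative-order term $\|(z,z_t)\|_{L^2\times H^{-2\alpha}}$ appearing there, and the Littlewood--Paley constants in the proofs of Lemmas \ref{L3.3} and \ref{3.3**}, are all bounded uniformly for $\alpha$ in a small closed interval around $\alpha_0$ — the harmonic-analysis constants (from Lemma \ref{L2.2}, Lemma \ref{L3.5.}) are continuous in $\alpha$ and the dangerous blow-up $C(\alpha)\to\infty$ only occurs as $\alpha\to 1$, so staying near $\alpha_0\in(1/2,1)$ avoids it. A secondary technical point is the frequency-splitting argument for hypothesis (iii), showing $((-\Delta)^{\alpha_n} - (-\Delta)^{\alpha_0})u_t^{\alpha_0}\to 0$ in a suitable negative-order space; this is where the harmonic-analysis framework of the paper, rather than a bounded-domain spectral argument, is essential.
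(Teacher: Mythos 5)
Your proposal follows essentially the same route as the paper's proof: verify the three hypotheses of Lemma \ref{L4.1} on a compact subinterval $[\alpha_0-\eta,\alpha_0+\eta]\subset(1/2,1)$, obtain $\alpha$-uniform absorbing and regularity estimates (observing that $C(\alpha)$ only blows up as $\alpha\to 1$), prove (ii) by the $\alpha$-uniform tail estimate of Lemma \ref{3.3**} plus the compact embedding $\mathcal{H}_{\alpha_0-\eta}(B_{K})\hookrightarrow\hookrightarrow\mathcal{H}(B_{K})$ on a fixed ball with interpolation, and prove (iii) by the difference-equation multiplier argument in which the term $[(-\Delta)^{\alpha_n}-(-\Delta)^{\alpha_0}]u^{\alpha_0}_t$ is controlled in $H^{-1}$ by dominated convergence on the Fourier symbol. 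One caveat: your closing suggestion to run (ii) through the quasi-stability inequality \eqref{2.6} applied to $w^{m,n}=u^{\alpha_m}(\cdot+t_m-T)-u^{\alpha_n}(\cdot+t_n-T)$ does not work as literally stated, because \eqref{2.6} is derived for two solutions of the \emph{same} equation (fixed $\alpha$), whereas this difference satisfies an equation with the extra forcing $[(-\Delta)^{\alpha_m}-(-\Delta)^{\alpha_n}]u^{\alpha_n}_t$; the paper instead concludes (ii) directly from the boundedness of the common invariant absorbing set in $\mathcal{H}_{\gamma}$ together with the compact Sobolev embedding on the ball, which is exactly the alternative you also describe, so you should simply drop the quasi-stability detour there. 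A second minor point: in (iii) the paper takes the multiplier $(I-\Delta)^{-\gamma}z_t+\epsilon z$ with $\gamma=\alpha_0-\eta<\alpha_n$ for all $n$ (rather than $(I-\Delta)^{-\alpha_0}$), so that the dissipative term $\|z_t\|^2_{H^{\alpha_n-\gamma}}$ with $\alpha_n-\gamma\geqslant 0$ absorbs $\epsilon\|z_t\|^2$ uniformly in $n$, and then interpolates back to convergence in $\mathcal{H}_{-1/2}$ and uses the arbitrariness of $\eta$ to cover all $1\leqslant p<p_{\alpha_0}$.
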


\begin{proof}
Without loss of generality we assume that $\alpha\in [\gamma, \gamma_1]\equiv \Gamma (\subset (1/2, 1))$ for $\alpha\rightarrow \alpha_0$, where $\gamma\equiv \alpha_0-\eta (>0), \gamma_1\equiv \alpha_0+\eta$,  with  $\eta: 0<\eta< \min\{\alpha_0-1/2, \alpha_0/3, (1-\alpha_0)/3\}$.

Similar to the proof of  \eqref{2.17}, for the approximate solution
$u^l (=u^{\alpha, l})$ of the auxiliary problem  \eqref{2.9''}, the following estimate holds:
\[\int^{t+1}_t\|u^{l}_{tt}(\tau)\|^2_{H^{-2\gamma}}d\tau
\leqslant C(R_0,\|f\|),\ \ \forall t>0, \alpha\in \Gamma.\]
When $1\leqslant p< p_\gamma(\equiv \frac{N+4\gamma}{(N-4\gamma)^+})$, similar to the proofs of \eqref{2.3} and \eqref{2.4} (replacing the multiplier $(I-\Delta)^{-\alpha}v^{l}_{t}+\epsilon v^{ l}$ after Eq. \eqref{2.18} by $(I-\Delta)^{-\gamma}v^{l}_{t}+\epsilon v^{ l}$) one easily obtains that
\begin{align}&
\|u^{l}_{t}(t)\|_{H^{\alpha}}^{2}+\|u^{l}_{tt}(t)\|_{H^{-\gamma}}^{2}
+\int^{t+1}_t(\|u^{l}_{t}(\tau)\|_{H^1}^2+\|u^{l}_{tt}(\tau)\|^2)d\tau
\leqslant \left(1+\frac{1}{t^2}\right)C(R_0, \|f\|)\label{4.2}
\end{align}
and
\begin{align}
&\|u^{l}(t)\|_{H^{1+\gamma}}^{2} \leqslant \|u^{l}(t)\|_{H^{1+\alpha}}^{2}
\leqslant \Big(1+\frac{1}{(t-\frac{a}{2})^\frac{1}{1-\alpha}}\Big)
C(\alpha)C(R_0, \|f\|), \ \ t> a. \label{4.3}
\end{align}
A simple calculation shows that  $C(\alpha)= (\frac{C\alpha}{1-\alpha})^\frac{\alpha}{1-\alpha}\leqslant C_3(\alpha_0)$ for  $\alpha\in \Gamma$. So   by the lower semicontinuity of   weak limit, estimates \eqref{4.2} and \eqref{4.3} (replacing the control constant $C(\alpha)$ there by $C_3(\alpha_0)$) uniformly ($w.r.t.\ \alpha\in \Gamma$)  hold for the weak solutions $u^\alpha$ of    problem \eqref{1.1}-\eqref{1.2}.

(i) It follows from  estimate \eqref{2.1}  that the family of dynamical systems  $(S^\alpha(t), \cal H), \alpha\in \Gamma$  possesses in $\cal H$ a common absorbing set
\begin{align*}
B_1\equiv\{(u, v)\in \cal H, \|(u, v)\|_{\cal H}\leqslant R_1\}\ \ \hbox{for}\ \ R_1\geqslant C(\|f\|),
\end{align*}
and  there exists a  $t_1=t(R_1, \|f\|)$ such that $\cup_{\alpha\in \Gamma} S^\alpha(t)B_1\subset B_1$ for  $t\geqslant t_1$. Let
\begin{align*}
\cal B_1=\bigcup_{\alpha\in \Gamma}\bigcup_{t\geqslant t_1+2}S^\alpha(t)B_1 (\subset B_1).
\end{align*}
Then the set $\cal B_1$ is bounded in $\cal H_{\alpha}$  (see estimates \eqref{4.2}-\eqref{4.3}). Moreover, $\cal B_1$ is a common forward invariant absorbing set of  the family of dynamical  systems $(S^\alpha(t), \cal H), \alpha\in \Gamma$.   And $\mathcal{B}_1=\cup_{\alpha\in \Gamma} \mathcal{B}_1$.

(ii) For any sequences $\{\alpha_n\}\subset \Gamma$,  $\{\xi_{u^n}\}\subset \cal B_1(\equiv \bigcup_{\alpha\in \Gamma}\cal B_1) $ and $t_n\rightarrow \infty $,  the sequence $\{S^{\alpha_n}(t_n)\xi_{u^n}\}$ is precompact in $\cal H$.

Indeed, let  $S^\alpha(t)(u_{0},u_{1})=(u^\alpha(t),u^\alpha_{t}(t))$, with $(u_{0},u_{1})\in \mathcal{B}_1$ and $\alpha\in \Gamma$.
It follows from Lemma \ref{3.3**} that for any $\epsilon>0$, there exist positive constants $K_{2}=K_{2}(\epsilon)> 0$ and $T_{2}=T_2(R_1)> 0$ such that for all $\alpha\in \Gamma$,
\begin{align*}
\|(u^\alpha(t),u^\alpha_{t}(t))\|_{\mathcal{H}(B^{C}_{K})}<\epsilon\ \ as\ \ K\geqslant K_{2}, \ t\geqslant T_{2}.
\end{align*}
So there exists a $N>0$ such that when $n, m\geqslant N$, $t_n, t_m\geqslant T_2$ and
\[\|S^{\alpha_n}(t_n)\xi_{u^n}\|_{\mathcal{H}(B^{C}_{K_2})}<\epsilon.\]
Therefore,
\begin{align*}
&\|S^{\alpha_n}(t_n)\xi_{u^n}-S^{\alpha_m}(t_m)\xi_{u^m}\|_{\mathcal{H}}
\leqslant \|S^{\alpha_n}(t_n)\xi_{u^n}-S^{\alpha_m}(t_m)\xi_{u^m}\|_{\mathcal{H}(B_{K_2})}+2\epsilon, \ \ \forall n, m \geqslant N,
\end{align*}
which, combining with the boundedness of $\cal B_1$ in $\cal H_{\gamma}$ and $\mathcal{H}_{\gamma}(B_{K_2})\hookrightarrow\hookrightarrow \mathcal{H}(B_{K_2})$ for $1\leqslant p< p_\gamma$,   implies that the sequence
\[\{S^{\alpha_n}(t_n)\xi_{u^n}\} \ \ \text{is precompact  in } \cal H.\]

(iii) When $1\leqslant p< p_\gamma$,  for any sequences $\{\alpha_n\}\subset \Gamma$  with $\alpha_n\rightarrow \alpha_0$ and $\{\xi_{u^n}\}\subset \cal H (\hookrightarrow \cal H_{-1/2})$ with $\xi_{u^n}\rightarrow \xi$ in $\cal H$,  we have
\begin{align}
 \lim_{n\rightarrow\infty}
\|S^{\alpha_n}(t)\xi_{u^n}-S^{\alpha_0}(t)\xi\|_{\cal H_{-1/2}}=0, \ \ \forall t\geqslant 0.\label{e4.4}
\end{align}

Indeed, obviously formula \eqref{e4.4} holds  for $t=0$.  We  show that  formula \eqref{e4.4} holds for any  $t>0$. Let
\[S^{\alpha_n}(t)\xi_{u^n}=\xi_{u^{\alpha_n}}(t)
=(u^{\alpha_n}(t),u^{\alpha_n}_t(t)), \ \  S^{\alpha_0}(t)\xi
=\xi_{u^{\alpha_0}}(t)=(u^{\alpha_0}(t),u^{\alpha_0}_t(t)),\]
then  $z(t)= u^{\alpha_n}(t)-u^{\alpha_0}(t)$ solves
\begin{align}\label{5.3}\begin{cases}
 z_{tt}+(-\Delta)^{\alpha_n} z_{t}+ z_t-\Delta z+z
+[(-\Delta)^{\alpha_n}-(-\Delta)^{\alpha_0}]u^{\alpha_0}_t
+g(u^{\alpha_n})-g(u^{\alpha_0})=0,\\
  \xi_z(0)=(z,z_{t})(0)=\xi_{u^n}-\xi.
\end{cases}
\end{align}
Using the multiplier  $(I-\Delta)^{-\gamma}z_t+\epsilon z$ in Eq. \eqref{5.3} turns out
\begin{align}
&\frac{d}{dt}H_5(\xi_z)
+\left\|\left(I+(-\Delta)^{\alpha_n}\right)^\frac{1}{2}
(I-\Delta)^{-\frac{\gamma}{2}}z_t\right\|^2-\epsilon\|z_t\|^2
+\epsilon\|z\|^2_{H^1}
+\epsilon(g(u^{\alpha_n})-g(u^{\alpha_0}), z)\nonumber\\
=&-\left(g(u^{\alpha_n})-g(u^{\alpha_0}), (I-\Delta)^{-\gamma}z_t\right)
-\left([(-\Delta)^{\alpha_n}-(-\Delta)^{\alpha_0}]u^{\alpha_0}_t, (I-\Delta)^{-\gamma} z_t+\epsilon z\right), \label{5.4}
\end{align}
where $\xi_z=(z,z_t)$, and
\begin{align*}
H_5(\xi_z(t))=&\frac{1}{2}\left(\|z_t\|^2_{H^{-\gamma}}
+\|z\|^2_{H^{1-\gamma}}
+\epsilon(\|z\|^2_{\dot{H}^{\alpha_n}}+\|z\|^2)\right)
+\epsilon(z, z_t)
\sim  \|z\|^2_{H^{\alpha_n}}+\|z_t\|^{2}_{H^{-\gamma}}%\label{4.8.}
\end{align*}
for $\epsilon>0$ suitably small. Obviously,
\begin{align}\label{a4.8}&
\beta_1\|\xi_z\|_{\mathcal{H}_{-\gamma}}\leqslant \|z\|_{H^{\alpha_n}}+\|z_t\|_{H^{-\gamma}}\leqslant \beta_2\|\xi_z\|_\mathcal{H},\\
&\left\|\left(I+(-\Delta)^{\alpha_n}\right)^\frac{1}{2}
(I-\Delta)^{-\frac{\gamma}{2}}z_t\right\|\sim \|z_t\|_{H^{\alpha_n-\gamma}},\nonumber
\end{align}
with $\beta_1, \beta_2> 0$. Taking account of the Sobolev embedding $H^{\alpha_n-\gamma}\hookrightarrow H^{1-2\gamma} $ for $\eta< \alpha_0-1/2$, we have
\begin{align}\label{4.8*}&
\left|\left([(-\Delta)^{\alpha_n}-(-\Delta)^{\alpha_0}]u^{\alpha_0}_t, (I-\Delta)^{-\gamma}z_t+\epsilon z\right)\right|\nonumber\\
\leqslant&
\|[(-\Delta)^{\alpha_n}-(-\Delta)^{\alpha_0}]u^{\alpha_0}_t\|_{H^{-1}}
(\|(I-\Delta)^{-\gamma}z_t\|_{H^{1}}
+\|z\|_{H^{1}})\nonumber\\
\leqslant& \frac{\epsilon}{4}(\|z\|^2_{H^{1}}+\|z_t\|_{H^{\alpha_n-\gamma}}^2)
+C\|[(-\Delta)^{\alpha_n}-(-\Delta)^{\alpha_0}]u^{\alpha_0}_t\|_{H^{-1}}^2.
\end{align}
So when $1\leqslant p< p_{\gamma}(< p_{\alpha_0})$, inserting \eqref{4.8*} and \eqref{2.35}-\eqref{2.36} (replacing $u, v$ there by $u^{\alpha_n}, u^{\alpha_0}$, respectively) into \eqref{5.4}, we obtain
\begin{align}
\frac{d}{dt}H_5(\xi_z(t))+\kappa H_5(\xi_z(t))\leqslant& C\| [(-\Delta)^{\alpha_n}-(-\Delta)^{\alpha_0}]u^{\alpha_0}_t\|_{H^{-1}}^2
 +C(\|z\|^2+\|z_t\|^2_{H^{-2\gamma}})\nonumber\\
\leqslant& CH_5(\xi_z(t))+ C\|[(-\Delta)^{\alpha_n}-(-\Delta)^{\alpha_0}]u^{\alpha_0}_t\|_{H^{-1}}^2
,\label{4.7}
\end{align}
where we have used the Sobelev embedding $H^{\alpha_n}\times H^{-\gamma} \hookrightarrow L^2\times H^{-2\gamma}$.  Applying the Gronwall lemma to \eqref{4.7} and making use of \eqref{a4.8}, we receive
\begin{align}
\|\xi_z(t)\|^2_{\cal H_{-\gamma}}\leqslant Ce^{Ct}\|\xi_z(0)\|^2_{\cal H}+ C\int^t_0e^{C(t-\tau)}
\|[(-\Delta)^{\alpha_n}-(-\Delta)^{\alpha_0}]u^{\alpha_0}_t(\tau)\|_{H^{-1}}^2d\tau.
\label{4.8}
\end{align}
By \eqref{2.1} and   \eqref{4.2},  $u^{\alpha_0}_t\in L^2(0, t; H^{\alpha_0})\cap C_w([0, t]; H)$, and
\begin{align}&
 \|[(-\Delta)^{\alpha_n}-(-\Delta)^{\alpha_0}]u^{\alpha_0}_t(\tau)\|_{H^{-1}}=\Big(\int(1+|\xi|^2)^{-1}(|\xi|^{2\alpha_n}-|\xi|^{2\alpha_0})^2
|\widehat{u^{\alpha_0}_t}(\xi,\tau)|^2
 d \xi\Big)^{\frac{1}{2}}\nonumber\\
&\leqslant
 C\|u^{\alpha_0}_t(\tau)\|_{H^{2(\alpha_0+\eta)-1}}\leqslant C\|u^{\alpha_0}_t(\tau)\|_{H^{\alpha_0}}< \infty,\ \ a.e. \ \  \tau\in [0, t]
\label{4.9}
\end{align}
for   $\eta< (1-\alpha_0)/2$. Since
\begin{align*} &
\lim_{\alpha_n\rightarrow \alpha_0}(1+|\xi|^2)^{-1}(|\xi|^{2\alpha_n}-|\xi|^{2\alpha_0})^2
|\widehat{u^{\alpha_0}_t}(\xi,\tau)|^2=0 \ \ a.e. \ \ \text{in} \ \  \mathbb{R}^N\times [0,t],\\
&
(1+|\xi|^2)^{-1}(|\xi|^{2\alpha_n}-|\xi|^{2\alpha_0})^2
|\widehat{u^{\alpha_0}_t}(\xi,\tau)|^2\leqslant C(1+|\xi|^2)^{\alpha_0}|\widehat{u^{\alpha_0}_t}(\xi, \tau)|^2\in L^1(\mathbb{R}^N\times [0,t])
\end{align*}
(see \eqref{4.9}), by the Lebesgue dominated convergence theorem,
\begin{align}&
\lim_{\alpha_n\rightarrow \alpha_0}\int ^t_0e^{C(t-\tau)}\|[(-\Delta)^{\alpha_n}-(-\Delta)^{\alpha_0}]
u^{\alpha_0}_t(\tau)\|_{H^{-1}}^2d\tau
=0.\label{4.12}
\end{align}
Thus, by  \eqref{4.8} and \eqref{4.12},
\begin{align*}
\lim_{\alpha_n\rightarrow \alpha_0}\|(z, z_t)(t)\|_{\cal H_{-\gamma}}= 0.
\end{align*}
Taking account of the Sobolev embedding   $H^\gamma\hookrightarrow  H^{1/2}, L^2\hookrightarrow H^{-1/2}\hookrightarrow H^{-\gamma}$ and making use of the interpolation, we have
\begin{align}\label{4.13}
    \lim_{\alpha_n\rightarrow \alpha_0}\|(z, z_t)(t)\|_{\cal H_{-1/2}}\leqslant C\lim_{\alpha_n\rightarrow \alpha_0}\|(z, z_t)(t)\|^{1/2\gamma}_{\cal H_{-\gamma}}= 0.
\end{align}
Since $p_\gamma=p_{\alpha_0-\eta}$, by the  arbitrariness of $\eta$, formula \eqref{4.13} holds  for $1\leqslant p< p_{\alpha_0}$.

Therefore, by Lemma  \ref{L4.1},  the family of global attractors  $\{\mathcal{A}_\alpha\}$   is upper semicontinuous at the point $\alpha_0$, i.e.,  formula \eqref{Equ4.1} holds.
\end{proof}

\begin {thebibliography}{90} {\footnotesize

\bibitem{Abergel}
F. Abergel, Existence and finite dimensionality of the global attractor for evolution equations on unbounded domains,
J. Differential Equations, 83  (1990) 85-108.

\bibitem{Babin}
A. V. Babin, M. I. Vishik, Attractors of partial differential evolution equations in an unbounded domain,
Proceedings of the Royal Society of Edinburgh, 116A (1990) 221-243.

\bibitem{Belleri}
V. Belleri, V. Pata, Attractors for semilinear strongly damped wave equations on $\mathbb{R}^3$, Discrete Contin. Dyn. Syst. 7 (2001) 719-735.

\bibitem{Carvalho2}
A. N. Carvalho, J. W. Cholewa, Regularity of solutions on the global attractor for a semilinear damped wave equation, J. Math. Anal. Appl. 337 (2008) 932-948.

\bibitem{Chemin}
J. Y. Chemin, Localization in Fourier space and Navier-Stokes system, Phase space analysis of partial differential equations, Vol. I. CRM series, Pisa; Centro, Edizioni, Scunla Normale superiore 53-135, 2004.

\bibitem{S-Chen}
S. P. Chen, R. Triggiani, Proof of two conjectures of G. Chen and D.L. Russell on structural damping for elastic systems, Lecture Notes in Math., vol. 1354, Springer-Verlag, 1988, pp. 234-256.

\bibitem{S-Chen1}
S. P. Chen, R. Triggiani, Proof of extension of two conjectures on structural damping for elastic systems, Pacific J. Math. 136 (1989) 15-55.

\bibitem{Chueshov1}
I. Chueshov, Global attractors for a class of Kirchhoff wave models with a structural nonlinear damping, J. Abstr. Differ. Equ. Appl. 1 (2010) 86-106.

\bibitem{Chueshov}
I. Chueshov, Long-time dynamics of Kirchhoff wave models with strong nonlinear damping, J. Differential Equations, 252 (2012) 1229-1262.

\bibitem{Conti}
M. Conti, V. Pata, M. Squassina,  Strongly damped wave equations on $\mathbb{R}^3$ with critical nonlinearities, Comm. Appl. Anal. 9  (2005) 161-176.

\bibitem{Y-D-L1}
P. Y. Ding, Z. J. Yang, Y. N. Li, Global attractor of the Kirchhoff wave models with  strong nonlinear damping, Appl. Math. Lett.  76  (2018) 40-45.

\bibitem{D-Y}
P. Y. Ding, Z. J. Yang, Attractors of the strongly damped Kirchhoff wave equation on $R^N$,  Comm. Pure Appl. Anal.  18 (2) (2019)   825-843.

\bibitem{Feireisl1}
E. Feireisl, Attractors for semilinear damped wave equations on $\mathbb{R}^3$, Nonlinear Anal.  23 (1994) 187-195.

\bibitem{Feireisl2}
E. Feireisl, Asymptotic behaviour and attractors for a semilinear damped wave equation with supercritical exponent, Proceedings of the Royal Society of Edinburgh, 125A (1995) 1051-1062.

\bibitem{G-G-H}
M. Ghisi, M. Gobbino, A. Haraux, Local and global smoothing effects for some linear hyperbolic equations with a strong dissipation,Trans. Amer. Math. Soc. 368 (2016) 2039-2079.

\bibitem{Karachalios}
N. I. Karachalios, N. M. Stavrakakis, Existence of a global attractor for semilinear dissipative  wave equations on $\R^N$, J. Differential Equations, 157 (1999) 183-205.

\bibitem{Kenig}
C. E. Kenig, G. Ponce, L. Vega, Well-posedness and scattering results for the generalized Korteweg-de Vries equation via the contraction principle, Comm. Pure. Appl. Math. 46 (1993) 527-620.

\bibitem{M-S}
X. Y. Mei, C. Y. Sun, Attractors for a sup-cubic weakly damped wave equation in $\mathbb{R}^3$, Discrete Contin. Dyn. Syst. Doi:10.3934/dcdsb.2019053.

\bibitem{Mielke}
A. Mielke, G. Schneider,  Attractors for modulation equations on unbounded domains-existence and comparison, Nonlinearity, 8 (1995) 743-768.

\bibitem{S}
A. Savostianov, Infinite energy solutions for critical wave equation with fractional damping in unbounded domains,  Nonlinear Anal. 136   (2016) 136-167.

\bibitem{Simon}
J. Simon, Compact sets in the space $L^p(0,T;B)$, Ann. Mat. Pura Appl. 146 (1986) 65-96.

\bibitem{St}
W. A. Strauss, On continuity of functions with values in various Banach spaces, Pacific J. Math. 19 (1966) 543-551.

\bibitem{Wang}
B. X. Wang, Attractors for reaction-diffusion equations in unbounded domains, Phys. D 128 (1999) 41-52.

\bibitem{WWQ}
L. Z. Wang, Y. H. Wang, Y. M. Qin, Upper semicontinuity of attractors for nonclassical diffusion equations in $H^1(\mathbb{R}^3)$, Applied Mathematics and Computation, 240 (2014) 51-61.

\bibitem{Yang}
Z. J. Yang,  Longtime behavior of the Kirchhoff type equation with strong damping on $ \mathbb{R}^{N}$, J. Differential Equations,   242 (2007) 269-286.

\bibitem{Y-D}
Z. J. Yang, P. Y. Ding, Longtime dynamics of the Kirchhoff equation with  strong damping and critical nonlinearity on $\mathbb{R}^{N}$, J. Math. Anal. Appl. 434 (2016) 1826-1851.

\bibitem{Y-D-L}
Z. J. Yang, P. Y. Ding, L. Li, Longtime dynamics of the Kirchhoff equations with fractional damping and supercritical nonlinearity, J. Math. Anal. Appl. 442 (2016) 485-510.

\bibitem{Y-L}
Z. J. Yang, Y. N. Li, Upper semicontinuity of pullback attractors for non-autonomous Kirchhoff wave equations, Discrete Contin. Dyn. Syst.  Doi:10.3934/dcdsb.2019036.

\bibitem{Y-L-N}
Z. J. Yang, Z. M Liu, P. P. Niu, Exponential attractor for the wave equation with structural damping and supercritical exponent, Communications in Contemporary Mathematics, (2015) 1550055.

\bibitem{Zelik}
S. V. Zelik, Attractors of reaction-diffusion systems in unbounded domains and their spatial complexity, Comm. Pure. Appl. Math. 56  (2003) 584-637.

\bibitem{Zelik1}
S. V. Zelik, The attractor for a nonlinear hyperbolic equation in the unbounded domain, Discrete Contin. Dyn. Syst. 7 (2001) 593-641.

}
\end{thebibliography}
\end{document}